\newenvironment{nofigures}{
	\FloatBarrier       % flush floats before
	\suppressfloats[t]  % suppress top floats inside this env
}{%
	\FloatBarrier       % flush floats after
}
\numberwithin{equation}{section}
\newtheorem{thm}{Theorem}[section]
\newtheorem{lem}[thm]{Lemma}
\newtheorem{prop}[thm]{Proposition}
\theoremstyle{definition}
\newtheorem{defn}[thm]{Definition}
\theoremstyle{remark}
\newtheorem{rem}[thm]{Remark}
\newcommand\intervalN[2]{[#1\cdots #2)}
\newcommand\intervalC[2]{[#1\cdots #2]}
\def\Z{\mathbb{Z}}
\def\N{\mathbb{N}}
\def\ab{\hspace*{2pt}}
\def\positivenaturals{\mathbb{Z}^+}
\def\FormalLaurent{\mathbb{F}_2((x^{-1}))}
\def\F{\mathbb{F}}
\def\Coll{\mathsf{C}}
\title{On the average stopping time of the Collatz map in $\F_2[x]$}
\date{}
\author{Manuel Inselmann
}
\begin{document}
	\maketitle
	\begin{abstract}
		\noindent
	Define the map $T_1$ on $\mathbb{F}_2[x]$ by $T_1(f)=\frac{f}{x}$ if $f(0)=0$ and $T_1(f)=\frac{(x+1)f+1}{x}$ if $f(0)=1$. For a non-zero polynomial $f$ let $\tau_1(f)$ denote the least natural $k$ number for which $T_1^{k}(f)=1$. Define the average stopping time to be $\rho_1(n)=\frac{\sum_{f\in \mathbb{F}_2[x], \text{deg}(f)=n }\tau_1(f)}{2^n}$. We show that $\lim_{n\rightarrow\infty}\frac{\rho_1(n)}{n}=2$, confirming a conjecture of Alon, Behajaina, and Paran. Furthermore, we give a new proof that $\tau_1(f)\in O(\text{deg}(f)^{1.5})$ for all $f\in\mathbb{F}_2[x]\setminus\{0\}$.
	\end{abstract}
	\section{Introduction}
	The Collatz conjecture is a notoriously difficult problem, although it is very easy to state: Define the Collatz map $\Coll$ on the positive natural numbers by $\Coll(n)=\frac{n}{2}$ if $n$ is even and $\Coll(n)=3n+1$ if $n$ is odd. The conjecture states that $\Coll^k(n)$ will eventually reach $1$ as $k$ grows for every $n\in\positivenaturals$. Recently some progress has been made in \cite{Tao}, but the conjecture remains wide open. For an overview, see \cite{lagarias}.
	
	Instead of tackling the Collatz Conjecture it seems wise to deal with simpler problems first. In \cite{Hicks} a polynomial analogue of the Collatz map is considered. Define the \textit{polynomial Collatz map} $T_0:\F_2[x]\rightarrow \F_2[x]$ by $$T_0(f)=\begin{cases}
		\frac{f}{x}&\text{\ab if\ab} f(0)=0,\\
(x+1)f+1&\text{\ab if\ab} f(0)=1.
	\end{cases}$$
The \textit{stopping time} $\tau_0(f)$ of $f\in\F_2[x]\setminus\{0\}$ is the least $k\in\N$ such that $T^k_0(f)=1$. Hicks et al. showed that $\tau_0(f)$ is finite and bounded by $O(\deg(f)^2)$ (see, \cite{Hicks}). In \cite{alon} Alon et al. recently showed that  $\tau_0(f)$ is bounded by $O(\deg(f)^{1,5})$ thereby significantly improving the bound of Hicks et al. Furthermore, they introduced the \textit{average stopping time} $\rho_0(n)=\frac{\sum_{f\in \mathbb{F}_2[x], \deg(f)=n }\tau_0(f)}{2^{n}}$ and conjectured that $\rho_0(n)$ grows linearly in $n$. We confirm this conjecture by showing that  $\frac{\rho_0(n)}{n}$ converges to $3$ as $n\rightarrow\infty$. Instead of $T_0$ we base our analysis on an acceleration of it: Notice that if $f(0)=1$ then always $((x+1)f(x)+1)(0)=f(0)+1=0$, thus, $T_0(T_0(f))=\frac{(x+1)f+1}{x}$, whenever $f(0)=1$. So we consider the map $T_1:\F_2[x]\rightarrow \F_2[x]$ defined by $$T_1(f)=\begin{cases}
	\frac{f}{x}&\text{\ab if\ab} f(0)=0,\\
	\frac{(x+1)f+1}{x}&\text{\ab if\ab} f(0)=1.
\end{cases}$$\\
Again, define the \textit{stopping time} $\tau_1(f)$ to be the minimal $k\in\N$ such that $T_1^k(f)=1$. As $2\tau_1(f)-\deg(f)=\tau_0(f)$, it is enough to analyze the \textit{average stopping time} $\rho_1(n)$ defined by $\frac{\sum_{f\in \mathbb{F}_2[x], \deg(f)=n }\tau_1(f)}{2^{n}}$. We show that $\frac{\rho_1(n)}{n}$ converges to $2$ as $n\rightarrow\infty$. We do this by changing again to a different transformation, which is conjugate to $T_1$. The map $\sigma:\F_2[x]\rightarrow\F_2[x]: f(x)\mapsto f(x+1)$ is an automorphism on $\F_2[x]$, with $\sigma\circ\sigma=\text{id}_{\F_2[x]}$. The transformation of interest will be $T=\sigma\circ T_1\circ \sigma$. Again define  the \textit{stopping time} $\tau(f)$ to be the minimal $k\in\N$ such that $T^k(f)=1$ and the \textit{average stopping time} $\rho(n)$ to be $\frac{\sum_{f\in \mathbb{F}_2[x], \deg(f)=n }\tau(f)}{2^{n}}$. Since $T_1$ and $T$ are conjugate  and $\sigma(1)=1$, we have $\tau_1(f)=\tau(\sigma(f))$ and since $\deg(\sigma(f))=\deg(f)$, we have $\rho_1(n)=\rho(n)$ for all $n\in\N$.
Writing down $T$ explicitly, we obtain:
$$T(f)=\begin{cases}
	\frac{f}{x+1}&\text{\ab if\ab} f(1)=0,\\
	\frac{xf+1}{x+1}&\text{\ab if\ab} f(1)=1.
\end{cases}$$\\
Thus, the main result of this work is the following:
\begin{thm}\label{main}
		It holds that $\lim_{n\rightarrow\infty}\frac{\rho(n)}{n}=2$.
\end{thm}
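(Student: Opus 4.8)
The plan is to work throughout with the conjugate map $T_1$, using $\rho(n)=\rho_1(n)$, because division by $x$ (rather than by $x+1$) linearizes the dynamics. Observe first that one application of $T_1$ to a polynomial $f$ of degree $n$ either lowers the degree by one (exactly when $f(0)=0$) or preserves it (when $f(0)=1$), that $1$ is the unique fixed point, and that degree $0$ is reached only at $1$. Hence every trajectory uses exactly $n$ degree-lowering steps, and $\tau_1(f)=n+(\text{number of degree-preserving steps before reaching }1)$. So Theorem~\ref{main} is equivalent to the assertion that the average number of degree-preserving steps over all $f$ with $\deg f=n$ is $(1+o(1))\,n$.

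Next I would linearize. Extending $T_1$ to $\F_2[[x]]$ and writing $S$ for the shift $(Sf)_j=f_{j+1}$, one checks $T_1 f=Sf$ when $f(0)=0$ and $T_1 f=(I+S)f$ when $f(0)=1$; the step type at time $k$ is recorded by the \emph{digit} $d_k(f):=(T_1^k f)(0)$. Since $S$ and $I+S$ commute, after a run with $a$ degree-lowering and $b$ degree-preserving steps the state equals $S^a(I+S)^b f$, whence $d_k(f)=\bigl(S^{a_k}(I+S)^{b_k}f\bigr)_0=\sum_{i\ge 0}\binom{b_k}{i}f_{a_k+i}\bmod 2$, where $a_k+b_k=k$ and $a_k$ counts degree-lowering steps before time $k$. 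The key lemma is that for $\deg f=n$ the assignment $(f_0,\dots,f_{n-1})\mapsto(d_0,\dots,d_{n-1})$ is triangular with unit diagonal (the coefficient of $f_k$ in $d_k$ is $\binom{b_k}{b_k}=1$), hence a bijection of $\{0,1\}^n$; so for uniformly random $f$ of degree $n$ the first $n$ digits are independent fair coins. Consequently $\tau_1(f)$ is the index of the $n$-th zero digit, the first $n$ steps contain $Z\sim\mathrm{Bin}(n,\tfrac12)$ degree-lowering steps, and $h:=T_1^n f$ is a polynomial whose degree $m=n-Z$ concentrates around $n/2$.

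This yields the self-similar identity $\tau_1(f)=n+\tau_1(T_1^n f)$, i.e. $\rho(n)=n+\mathbb E\,[\tau_1(T_1^n f)]$, or with $M(n):=\sum_{\deg f=n}\tau_1(f)$,
\[ M(n)=n\,2^n+\sum_g \nu_n(g)\,\tau_1(g), \]
where $\nu_n$ is the push-forward law of $T_1^n f$, whose degree profile is exactly $\nu_n(\{\deg=m\})=\binom nm 2^{-n}$. Were $\nu_n$ uniform on each degree class, this would reduce to $M(n)=n2^n+\sum_{m}\binom nm\rho(m)$, whose self-consistent solution is $\rho(n)\sim 2n$ (indeed $\rho(m)=2m$ returns $\rho(n)=n+2\cdot\tfrac n2=2n$). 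This is the origin of the constant $2$, matching the fair-coin heuristic that degree-lowering and degree-preserving steps should balance.

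The main obstacle is precisely that $\nu_n$ is \emph{not} uniform on degree classes: small cases show that $T_1^n f$ concentrates on polynomials that are already far advanced (small $\tau_1$), and this downward bias decays only slowly, so one cannot simply substitute the induction hypothesis $\rho(m)\sim 2m$. The fair-first-block lemma alone gives merely $\rho(n)\ge(\tfrac32-o(1))\,n$; closing the gap to $2$ requires showing that the dynamics re-randomizes at every scale, i.e. that the push-forward ensemble again spends asymptotically equal numbers of degree-lowering and degree-preserving steps. Quantitatively one must prove $\sum_g\nu_n(g)\,S(g)=(1+o(1))\tfrac n2\,2^n$, where $S:=\tau_1-\deg$. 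I would attack this using the linear $\{S,I+S\}$ (Lucas) description of the digits to derive a closed recursion for this second-order quantity—tracking only finitely many low-order coefficients at each scale and exploiting the self-similarity of Pascal's triangle modulo $2$—and control the contribution of atypical long trajectories through the a priori bound $\tau_1(g)=O(\deg(g)^{1.5})$ together with the concentration of $Z$, finishing with a two-sided induction on $n$ giving matching bounds $\rho(n)=2n+o(n)$. I expect this quantitative control of the time-$n$ push-forward to be by far the hardest part of the argument.
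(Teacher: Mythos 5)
Your first block is sound and matches the paper's starting point: the triangular-with-unit-diagonal argument showing that the first $n$ digits of a uniform degree-$n$ polynomial are i.i.d.\ fair coins is exactly Lemma~\ref{prob} of the paper (proved there by noting that adding $(x+1)^k h$ with $h(1)=1$ flips the $k$-th parity and none before it), and your identity $\tau_1(f)=n+\tau_1(T_1^n f)$ together with the binomial degree profile correctly isolates the constant $2$. But the step you yourself flag as the hardest --- that the ensemble ``re-randomizes at every scale'' --- is the actual content of the theorem, and your proposal leaves it as a plan (``derive a closed recursion \dots exploiting the self-similarity of Pascal's triangle modulo $2$'') rather than a proof. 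Worse, the route you sketch fights a statement that is false in its natural strong form: as you observe, the time-$n$ push-forward on polynomials is \emph{not} uniform within degree classes, so no induction hypothesis of the form $\rho(m)\sim 2m$ can simply be substituted. What your text rigorously establishes is only the fair-first-block consequences: $\rho(n)\geq(\tfrac32-o(1))n$ and the a priori $O(n^{1.5})$ upper bound.

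The paper closes precisely this gap by enlarging the space instead of estimating the bias. It embeds $\F_2[x]$ into $\FormalLaurent$ and puts the Haar measure $\mu_n$ on $Q_n=\{\deg=n\}$; there (i) the $k$-fold iterate is literally a multiplication, $S^k(r)=x^{\sum_{i<k}p(r)_i}(x+1)^{-k}r$ (Lemma~\ref{iterationS}); (ii) multiplication by $x^a(x+1)^{-b}$ transports $\mu_n$ to $\mu_{n+a-b}$ \emph{exactly} (Lemma~\ref{InvarianceMultiplication}) --- the infinite tail of the Laurent series supplies the fresh randomness that polynomials lack, so re-randomization is an identity, not an estimate; and (iii) the order $r\preccurlyeq_x s\Rightarrow\tau(r)\leq\tau(s)$ (Lemma~\ref{shift}) lets one compare the true state after a block with a reference multiplicative image of the \emph{original} $r$, so the conditional law of the state is never needed: the bad event for each block is the pullback, under a measure-preserving multiplication, of a set that Hoeffding makes exponentially small, and on the union of bad sets the $O(\deg^{1.5})$ bound suffices. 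Geometrically shrinking blocks $k_{i+1}\approx(\tfrac12\pm\delta)k_i$ then yield the matching bounds $(2\pm\epsilon)n$ of Propositions~\ref{upper} and~\ref{lower}, and the polynomial statement follows since $\tau(r)=\tau([r])$ and $[\cdot]$ pushes $\mu_n$ to $\nu_n$. Without this enlargement (or some equivalent device) plus the monotone comparison, your outline has no proof of its central step.
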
 This theorem can be compared to the situation in the case of the Collatz map:
One can also define the map $\mathsf{C_2}$ as an acceleration of the Collatz map by setting $\mathsf{C_2}(n)=\frac{n}{2}$ if $n$ is even and $\mathsf{C_2}(n)=\frac{3n+1}{2}$ is $n$ is odd. For $n\in\positivenaturals$ define its \textbf{(total) stopping time} $\sigma(n)$ to be the minimal $k\in\N$ such that $C^k(n)=1$ if such a $k$ exists and set $\sigma(n)=\infty$ otherwise.  Crandall and Shanks conjectured that $\frac{1}{x}\sum_{n=1}^{\lfloor x\rfloor}\sigma(n)\sim 2(\log\frac{4}{3})^{-1}\log x$ (see \cite[page 13]{Lagariasgeneralizations}) -- or equivalently $\lim_{x\rightarrow\infty}\frac{1}{x\log x}\sum_{m=1}^{\lfloor x\rfloor}\tau(m)= 2(\log\frac{4}{3})^{-1}$. Thus, Theorem \ref{main} shows that the polynomial analogue of this Conjecture is true, where $\log n$ corresponds to the notion of the degree of a polynomial.

Furthermore, we will provide a new proof of the fact that $\tau(f)$ (and thus, $\tau_0(f)$) is bounded by $O(\deg(f)^{1.5})$ for all non-zero polynomials in $\F_2[x]$. 
   \section{On the average stopping time}  
 In the following, let $\FormalLaurent$ denote the field of formal power series of the form $\sum_{z\in\Z}a_zx^z$ with coefficients in $\F_2$ and $a_z\neq 0$ only for finitely many positive integers $z$ with addition defined by $$\sum_{z\in\Z}a_zx^z+ \sum_{z\in\Z}b_zx^z=\sum_{z\in\Z}(a_z+b_z)x^z$$
and multiplication defined by $$\left(\sum_{z\in\Z}a_zx^z\right)\left( \sum_{z\in\Z}b_zx^z\right)=\sum_{z\in\Z}\sum_{i+j=z}a_ib_jx^z.$$ Note that $\F_2[x]$ canonically is a subset of $\FormalLaurent$. Note also that $\FormalLaurent$ is isomorphic to the field $\F_2((x))$ of formal Laurent series over $\F_2$  by the isomorphism  $\sum_{z\in\Z}a_zx^z\mapsto \sum_{z\in\Z}a_zx^{-z}$ (we reversed the order to simplify notation). Let $\deg(\sum_{z\in\Z}a_zx^z)=\max\{z\mid a_z\neq 0\}$, in particular $\deg(0)=-\infty$. Note that $\deg(rs)=\deg(s)+\deg(r)$ for all $r,s\in \FormalLaurent$.  The primary observation is that $T$ is connected to a transformation on $\FormalLaurent$. We define $$P:\FormalLaurent\rightarrow \FormalLaurent; r\mapsto \frac{x}{x+1}r$$ (note that $\frac{x}{x+1}=\sum_{k=0}^\infty x^{-k}\in \FormalLaurent$).
 We also define the \textit{polynomial part map} $$[\cdot]:\FormalLaurent\rightarrow\F_2[x];\left[\sum_{z\in\Z}a_zx^z\right]=\sum_{z\in\N}a_kx^k.$$ Now, define the map $S: \FormalLaurent\rightarrow\FormalLaurent$ for $r\in\FormalLaurent$ by
$$S(r)=\begin{cases}
	\frac{r}{x+1}&\text{\ab if\ab} [r](1)=0,\\
	\frac{xr}{x+1}&\text{\ab if\ab} [r](1)=1.
\end{cases}$$\\
Or in short $S(r)=x^{[r](1)-1}P(r)$ (to abbreviate notation, we will assume that whenever $i\in\F_2$ appears in the exponent of a polynomial, the intended interpretation is as a natural number via the embedding $\F_2\rightarrow\N: 0\mapsto 0, 1\mapsto 1$).
In the following lemma we gather some useful properties.
\begin{lem}\label{gathering}
Suppose that $r,s\in\FormalLaurent$ and $r=\sum_{z\in\Z}a_zx^z$. 
\begin{enumerate}
	\item $P(r)=\sum_{z\in\Z}(\sum_{j=z}^\infty a_j)x^z$,
	\item $S(r)=\sum_{z\in\Z}(\sum_{j=z}^\infty a_j)x^{z+[r](1)-1}$,
	\item  $[r+s]=[r]+[s]$,
	\item $[P(r)]=[P([r])]$,
	\item $P^{-1}(r)=\sum_{k\in\Z}(a_{k+1}+a_{k})x^k$,
	\item If $[P(r)]=[P(s)]$, then $[r]=[s]$.
\end{enumerate}
\end{lem}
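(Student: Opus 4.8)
The plan is to prove each of the six identities in Lemma~\ref{gathering} directly from the definitions, treating them as a toolbox of formal manipulations. Since all the statements are assertions about coefficients of formal Laurent series (or polynomials), the natural strategy throughout is to compute the coefficient of $x^z$ on both sides and check equality in $\F_2$. I expect the bulk of the work to be purely bookkeeping, and the only genuinely delicate points to be (i) verifying that the infinite sums appearing actually make sense in $\FormalLaurent$, i.e.\ that only finitely many positive exponents have nonzero coefficients, and (ii) item~(6), where I must argue that $P$ is ``injective enough'' after applying the polynomial-part map.

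\medskip
\noindent\textbf{Items (1), (2), (5).} For (1), I would start from $P(r)=\frac{x}{x+1}r=\left(\sum_{k=0}^\infty x^{-k}\right)r$ and multiply out: the coefficient of $x^z$ in the product is $\sum_{k\ge 0}a_{z+k}=\sum_{j\ge z}a_j$, which is exactly the claimed formula. Here I would note that the inner sum is finite because $a_j=0$ for all large $j$, so $P(r)$ is a well-defined element of $\FormalLaurent$. Item~(2) is then immediate from (1) together with the shorthand $S(r)=x^{[r](1)-1}P(r)$: multiplying by $x^{[r](1)-1}$ simply shifts every exponent by $[r](1)-1$. For (5), I would verify that the stated formula inverts $P$ by a direct computation: applying $P$ to $\sum_k(a_{k+1}+a_k)x^k$ and using (1) gives coefficient $\sum_{j\ge z}(a_{j+1}+a_j)$, which telescopes to $a_z$ (the tail $a_{j+1}$ as $j\to\infty$ vanishes), recovering $r$; alternatively one checks directly that multiplication by $\frac{x+1}{x}=1+x^{-1}$ produces the claimed coefficients.

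\medskip
\noindent\textbf{Items (3), (4), (6).} Item~(3) is the statement that the polynomial-part map is additive, which follows at once from the coefficientwise definition of both addition and $[\cdot]$: taking the nonnegative-exponent part commutes with coefficientwise addition. For item~(4), the key observation is that $r$ and $[r]$ differ only in their strictly-negative-exponent coefficients, so I write $r=[r]+n$ where $n$ collects the terms of negative degree; by (3) applied after $P$ (or rather by linearity of $P$) it suffices to show $[P(n)]=0$, i.e.\ that $P$ maps a series supported on negative exponents to another such series. Using the explicit formula from (1), the coefficient of $x^z$ in $P(n)$ for $z\ge 0$ is $\sum_{j\ge z}(\text{coeff of }x^j\text{ in }n)=0$ since $n$ has no nonnegative terms; hence $[P(n)]=0$ and $[P(r)]=[P([r])]$. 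Finally, item~(6) is where I expect the main obstacle: I would derive it from (4) and (5). Assuming $[P(r)]=[P(s)]$, I apply (4) to get $[P([r])]=[P([s])]$, so both $[r]$ and $[s]$ are polynomials whose images under $P$ have the same polynomial part. Since $P$ is invertible with the formula from (5), and since (4) shows $[\cdot]$ interacts cleanly with $P$, the cleanest route is to observe that for a \emph{polynomial} $p$ the full series $P(p)$ is determined by its polynomial part up to negative-degree tail, and then apply $P^{-1}$; I would argue that $[r]=[P^{-1}([P(r)])]$ holds by combining (4) and (5), so equal values of $[P(r)]$ force equal values of $[r]$. The care needed here is to make sure the negative-degree discrepancies genuinely cannot affect the polynomial part after inverting, which is exactly what (4) and the telescoping in (5) guarantee.
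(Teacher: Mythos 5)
Your treatment of items (1)--(5) matches the paper's essentially line for line: (1) by multiplying out $\frac{x}{x+1}=\sum_{k\geq 0}x^{-k}$ and collecting coefficients, (2) as a shift by $x^{[r](1)-1}$, (3) coefficientwise, (4) from the explicit formula of (1) (the paper reads it off directly --- for $z\geq 0$ the tail sum $\sum_{j\geq z}a_j$ involves only nonnegative coefficients --- rather than splitting $r=[r]+n$, but the content is identical), and (5) via $P^{-1}=$ multiplication by $1+x^{-1}$ (the paper verifies $(1+x^{-1})\sum_{k\in\N}x^{-k}=1$; your telescoping check of the inverse formula is the same computation in reverse). The genuine divergence is item (6). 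The paper argues directly: by (1), $[P(r)]=[P(s)]$ means $\sum_{j\geq n}a_j=\sum_{j\geq n}b_j$ for every $n\in\N$, and then $a_n=\sum_{j\geq n}a_j+\sum_{j\geq n+1}a_j$ (we are in $\F_2$) recovers each nonnegative coefficient --- two lines, no invertibility needed. You instead prove the stronger structural identity $[r]=[P^{-1}([P(r)])]$ and deduce (6) at once. That identity is true and your route is sound: writing $[P(r)]=P(r)+t$ with $\deg t<0$, one gets $P^{-1}([P(r)])=r+(1+x^{-1})t$ with $\deg\left((1+x^{-1})t\right)=\deg t<0$, so taking polynomial parts (via (3)) kills the discrepancy. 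One imprecision: you cite (4) at this point, but (4) concerns $P$, and what you actually need is its mirror for $P^{-1}$, namely $[P^{-1}(s)]=[P^{-1}([s])]$, which follows from the formula in (5) exactly as (4) follows from (1) (the coefficient $a_{k+1}+a_k$ for $k\geq 0$ involves only nonnegative coefficients) --- or, equivalently, the degree bookkeeping just given; this is a one-line fix, not a gap. What your route buys: it exhibits $f\mapsto[P^{-1}(f)]$ as an explicit left inverse of $r\mapsto[P(r)]$ at the level of polynomial parts, and the same argument proves (6) with $P$ replaced by multiplication by any $u\in\FormalLaurent$ with $\deg u=0$ (so that both $u$ and $u^{-1}$ preserve series of negative degree). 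The paper's differencing argument is shorter and self-contained, but is tied to the specific tail-sum formula for $P$.
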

\begin{proof}
	To see $1.$ just note  $$P\left(\sum_{z\in\Z}a_zx^z\right)=\sum_{k=0}^\infty x^{-k}\sum_{z\in\Z}a_zx^z=\sum_{z\in\Z}\left(\sum_{j=z}^\infty a_j\right)x^z.$$ Then $2.$ follows from $1.$ as $S(r)=x^{[r](1)-1}P(r)$. Set $s=\sum_{z\in\Z}b_zx^z$. Then $3.$ is clear as $$[r+s]=\left[\sum_{z\in\Z}a_zx^z+\sum_{z\in\Z}b_zx^z\right]=\left[\sum_{z\in\Z}(a_z+b_z)x^z\right]=\sum_{z\in\N}(a_z+b_z)x^z=[r]+[s].$$ Now, $4.$ is a direct consequence of $1.$ as $$\left[P\left(\sum_{z\in\Z}a_zx^z\right)\right]=\left[\sum_{z\in\Z}\left(\sum_{j=z}^\infty a_j\right)x^z\right]=\sum_{z\in\N}\left(\sum_{j=z}^\infty a_j\right)x^z=[P([r])].$$ For $5.$ note that $(1+x^{-1})\left(\sum_{k\in\N}x^{-k}\right)=1$, thus, $P^{-1}(r)=(1+x^{-1})r$ and $$(1+x^{-1})\left(\sum_{z\in\Z}a_zx^z\right)=\sum_{k=0}^\infty (a_{z+1}+a_z)x^z.$$
	To see $6.$ let $r=\sum_{z\in\Z}a_zx^z$ and $s=\sum_{z\in\Z}b_zx^z$.  Then $[P(r)]=[P(s)]$ implies that $\sum_{j\geq n}a_j=\sum_{j\geq n}b_j$ for all $n\in\N$ by  $1.$ and from this one deduces that $a_k=b_k$ for all $k\geq 0$.
\end{proof}
A key observation is the following:
 \begin{lem}\label{compatible}
 	For every $r\in\FormalLaurent$ we have $T([r])=[S(r)]$ and inductively $T^n([r])=[S^n(r)]$ for all $n\in\N$.
 \end{lem}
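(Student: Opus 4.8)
The plan is to prove the two assertions together, establishing the single-step identity $T([r]) = [S(r)]$ first and then getting the iterated version by a trivial induction. The key point is that both $T$ and $S$ branch according to the same boolean quantity, namely the value at $x=1$ of the polynomial part $[r]$, so I expect the two sides to match branch-by-branch once I unwind the definitions.

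First I would fix $r \in \FormalLaurent$ and observe that the branch selector for $T$ acting on the polynomial $[r]$ is $[r](1)$, while the branch selector for $S$ acting on $r$ is also $[r](1)$ by definition of $S$; so the same case applies on both sides and I only ever need to compare the outputs within a single branch. In the branch $[r](1)=0$ I must show $\frac{[r]}{x+1} = \bigl[\frac{r}{x+1}\bigr]$, and in the branch $[r](1)=1$ I must show $\frac{x[r]+1}{x+1} = \bigl[\frac{xr}{x+1}\bigr]$. The natural tool is part $4$ of Lemma \ref{gathering}, which says $[P(r)] = [P([r])]$, i.e.\ the polynomial-part map commutes with $P$ up to first replacing $r$ by $[r]$. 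Since $S(r) = x^{[r](1)-1}P(r)$, I would want an analogous statement for $S$: that $[S(r)]$ depends on $r$ only through $[r]$.

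The main technical step is therefore to verify $[S(r)] = [S([r])]$, after which the identity $T([r]) = [S([r])]$ reduces to a computation purely inside $\F_2[x]$. To get $[S(r)]=[S([r])]$ I would use the explicit formula in part $2$ of Lemma \ref{gathering}, $S(r) = \sum_{z}\bigl(\sum_{j=z}^\infty a_j\bigr)x^{z+[r](1)-1}$: taking the polynomial part extracts the coefficients with $z + [r](1)-1 \ge 0$, and because the tail sums $\sum_{j=z}^\infty a_j$ for $z \ge 0$ already only involve the coefficients $a_j$ with $j\ge 0$ (the nonnegative-degree coefficients are the same for $r$ and $[r]$), replacing $r$ by $[r]$ changes none of the extracted coefficients; in the $[r](1)=1$ branch the shift is by $0$ and in the $[r](1)=0$ branch the shift is by $-1$, and in the latter case I must check the one borderline coefficient at $z=0$ vanishes, which it does since it would sit in degree $-1$. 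This borderline bookkeeping in the $[r](1)=0$ case is the one place to be careful; everything else is a direct substitution.

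Once $[S(r)]=[S([r])]$ is in hand, I compute $[S([r])]$ directly for a polynomial $p=[r]\in\F_2[x]$: when $p(1)=0$, $S(p)=\frac{p}{x+1}$ is already a polynomial (since $x+1$ divides $p$), so $[S(p)]=\frac{p}{x+1}=T(p)$; when $p(1)=1$, $S(p)=\frac{xp}{x+1}$ and $T(p)=\frac{xp+1}{x+1}$ differ by $\frac{1}{x+1}=\sum_{k\ge 1}x^{-k}$, a series with zero polynomial part, so $[S(p)]=[T(p)]=T(p)$ since $T(p)$ is a polynomial. This yields $T([r])=[S(r)]$. Finally, for the iterated statement I argue by induction on $n$: the base case $n=0$ is trivial, and for the inductive step I write $T^{n+1}([r]) = T(T^n([r])) = T([S^n(r)])$ by the induction hypothesis, and then apply the single-step identity to the element $S^n(r)\in\FormalLaurent$ to get $T([S^n(r)]) = [S(S^n(r))] = [S^{n+1}(r)]$, closing the induction.
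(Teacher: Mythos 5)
Your proof is correct and is essentially the paper's argument in a lightly reorganized form: both split on the common branch selector $[r](1)$ and reduce everything to the fact that negative-degree tails are annihilated by $[\cdot]$, the paper doing this in one pass via the decompositions $S(r)=\frac{[r]}{x+1}+\frac{r-[r]}{x+1}$ and $T([r])=P(r)+P([r]-r)+\frac{1}{x+1}$ with degree bounds, while you first isolate the intermediate identity $[S(r)]=[S([r])]$ (the $S$-analogue of part $4$ of Lemma \ref{gathering}, extracted from the coefficient formula in part $2$) and then verify $[S(p)]=T(p)$ purely inside $\F_2[x]$, correctly handling the borderline degree $-1$ term in the $[r](1)=0$ branch. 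Your induction step is identical to the paper's.
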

\begin{proof}
	Let $r=\sum_{z\in\Z}a_zx^z\in\FormalLaurent$. There are two cases: If $[r](1)=0$, then $T([r])=\frac{[r]}{x+1}$ and $S(r)=\frac{r}{x+1}=\frac{[r]}{x+1}+\frac{r-[r]}{x+1}$. Now, $\deg(\frac{r-[r]}{x+1})< 0$, thus, $[S(r)]=\frac{[r]}{x+1}=T([r])$.\\
	 If $[r](1)=1$ then $$T([r])=\frac{x[r]+1}{x+1}=\frac{x[r]}{x+1}+\frac{1}{x+1}=P(r)+P([r]-r)+\frac{1}{x+1}.$$ As $\deg\left(P([r]-r)+\frac{1}{x+1}\right)< 0$, it follows that $\left[P(r)+P([r]-r)+\frac{1}{x+1}\right]=[P(r)]$, thus, $T([r])=[S(r)]$. For the second claim, if $T^k([r])=[S^k(r)]$, then $$T^{k+1}([r])=T(T^k([r]))=T([S^k(r)])=[S(S^k(r))]=[S^{k+1}(r)].$$
\end{proof}
Let $\FormalLaurent^+=\{r\in\FormalLaurent\mid \deg(r)\geq 0\}$. For $r\in\FormalLaurent^+$ define $\tau(r)$ to be the minimal natural number $n$ such that $[S^n(r)]=1$. Note that $\tau(r)$ is well-defined and coincides with the definition of $\tau$ when $[r]=r$ since $T^n(r)=[S^n(r)]$  by Lemma \ref{compatible}.
\begin{defn}
	Suppose that $r,s\in \FormalLaurent$. We write $r\preccurlyeq_xs$ if there exists $n\geq 0$ such that $x^nr=s$ (we also write $s\succcurlyeq_xr$ if $r\preccurlyeq_xs$). 
\end{defn}
Another key observation is the following:
\begin{lem}\label{shift}
	Suppose $r,s\in\FormalLaurent^+$. If $r\preccurlyeq_xs$, then also $S(r)\preccurlyeq_xS(s)$. In particular, $\tau(r)\leq\tau(s)$.
\end{lem}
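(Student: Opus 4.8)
The plan is to establish the two assertions in turn, obtaining the monotonicity of $\tau$ as a consequence of the shift-compatibility statement. The whole argument rests on one structural feature: $P$ is multiplication by the fixed element $\frac{x}{x+1}\in\FormalLaurent$, so it commutes with multiplication by powers of $x$ in the field $\FormalLaurent$. To prove $S(r)\preccurlyeq_x S(s)$ I would write $s=x^n r$ with $n\geq 0$ (the meaning of $r\preccurlyeq_x s$), so that $P(s)=\frac{x}{x+1}x^n r=x^n P(r)$. Using the closed form $S(t)=x^{[t](1)-1}P(t)$ and substituting $P(r)=x^{1-[r](1)}S(r)$, I would compute
\[
S(s)=x^{[s](1)-1}P(s)=x^{[s](1)-1+n}P(r)=x^{\,n+[s](1)-[r](1)}\,S(r).
\]
It then remains to check that the exponent $e:=n+[s](1)-[r](1)$ is a nonnegative integer. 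Since $[r](1),[s](1)\in\{0,1\}$ we have $[s](1)-[r](1)\geq -1$, so $e\geq n-1\geq 0$ once $n\geq 1$; and if $n=0$ then $s=r$, forcing $[s](1)=[r](1)$ and $e=0$. Hence $S(s)=x^{e}S(r)$ with $e\geq 0$, i.e. $S(r)\preccurlyeq_x S(s)$.

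For the \emph{in particular} clause I would first record that $S$ maps $\FormalLaurent^+$ into itself. Indeed $\deg(S(t))=[t](1)-1+\deg(t)$ because $\deg(\frac{x}{x+1})=0$; when $\deg(t)\geq 1$ this is $\geq 0$, and when $\deg(t)=0$ one has $[t]=1$, hence $[t](1)=1$ and $\deg(S(t))=0$. Iterating the first part then gives $S^n(r)\preccurlyeq_x S^n(s)$ for every $n$, with both iterates again lying in $\FormalLaurent^+$. The key remaining observation is that for $u,v\in\FormalLaurent^+$ with $u\preccurlyeq_x v$, the equality $[v]=1$ forces $[u]=1$: writing $v=x^m u$ with $m\geq 0$, the condition $[v]=1$ gives $\deg(v)=0$, so $\deg(u)=\deg(v)-m=-m$; but $u\in\FormalLaurent^+$ means $\deg(u)\geq 0$, whence $m=0$, $u=v$ and $[u]=[v]=1$.

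Applying this with $n=\tau(s)$, $u=S^n(r)$ and $v=S^n(s)$, the relation $[S^n(s)]=1$ yields $[S^n(r)]=1$, so $\tau(r)\leq n=\tau(s)$, which is the desired inequality. I expect the only genuinely delicate point to be the sign bookkeeping for the power of $x$ relating $S(r)$ and $S(s)$: one must confirm that this shift is never negative. This is precisely where the restriction to $\FormalLaurent^+$ — and the resulting implication $\deg(t)=0\Rightarrow[t](1)=1$ — is indispensable, being used both to force $e\geq 0$ in the first step and to exclude $m\geq 1$ in the final claim.
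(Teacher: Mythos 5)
Your proof is correct and follows essentially the same route as the paper: the paper reduces via transitivity of $\preccurlyeq_x$ to the single-step case $S(r)\preccurlyeq_x S(xr)$ (your exponent computation $S(s)=x^{n+[s](1)-[r](1)}S(r)$ is just the general-$n$ version of this), and both arguments conclude the \emph{in particular} clause by noting that $S$ preserves $\FormalLaurent^+$, so $[S^k(s)]=1$ together with $S^k(r)\preccurlyeq_x S^k(s)$ forces $S^k(r)=S^k(s)$ by degree considerations. Your write-up merely spells out in more detail the degree bookkeeping that the paper compresses into ``if $\deg(r)\geq 0$ then also $\deg(S(r))\geq 0$, hence $[S^k(r)]=1$.''
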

\begin{proof}
		As $\preccurlyeq_x$ is transitive it is enough to note that $S(r)\preccurlyeq_x S(xr)$ for all $r\in\FormalLaurent^+$, which is immediate from the definition of $S$. Suppose that $[S^k(s)]=1$ holds. By what we have just shown $S^k(r)\preccurlyeq_xS^{k}(s)$. Note that if $\deg(r)\geq 0$ then also $\deg(S(r))\geq 0$, hence $[S^k(r)]=1$, thus, $\tau(r)\leq \tau(s)$.
\end{proof}
\begin{defn}\label{defmeasure}
	For $n\in\N$ define $P_n=\left\{f\in\F_2[x]\mid \deg(f)=n\right\}$ and let $\nu_n$ be the uniform probability measure on $P_n$, i.e., $\nu_n(\{f\})=\frac{1}{2^n}$. Set  $P_{\leq n}=\left\{f\in\F_2[x]\mid \deg(f)\leq n\right\}$. Note  that $(P_{\leq n}, +)$ is a finite group. If $\nu_{\leq n}$ is the Haar probability measure on $P_{\leq n}$, then $\nu_n$ is the restriction of $2\nu_{\leq n}$ to $P_n$. \\
	Now, define $Q_n=\left\{r\in\FormalLaurent\mid \deg(r)=n\right\}$ for $n\in\N$. Similarly as before we define a probability measure on $Q_n$. Set  $Q_{\leq n}=\left\{f\in\F_2[x]\mid \deg(f)\leq n\right\}$
	 Note  that $(Q_{\leq n}, +)$ is a compact group with respect to the norm $|f|=2^{\deg(f)}$. Let $\mu_{\leq n}$ be the Haar probability measure on $Q_{\leq n}$ and define $\mu_n$ to be the restriction of $2\mu_{\leq n}$ to $Q_n$. Since $Q_n\cup (x^n+Q_n)=Q_{\leq n}$ and  $Q_n\cap (x^n+Q_n)=\emptyset$, $\mu_n$ is a probability measure.
	
	 Define the sets $N^n_f$, for any $f\in \F_2[x]\setminus\{0\}$, by $N_f^n=\{r\in Q_n\mid [x^{\deg f-n}r]=f\}$. It is easy to determine $\mu_n(N^n_f)$. Just note that if $m=\deg(f)$ then $Q_n=\bigcup_{g\in P_{m}}N^n_g$ and the elements of the union are pairwise disjoint. Furthermore, $N_g=N_h+(g-h)x^{n-m}$ for all $g,h\in P_m$. Thus, $\mu_n(N^n_g)=\mu_n(N^n_h)$ for all $g,h\in P_m$ and hence $\mu_n(N^n_f)=2^{-m}$.
\end{defn}
Note that $[Q_n]=P_n$ and the push-forward measure of $\mu_n$ under the map $r\mapsto [r]$ is $\nu_n$.
By Lemma \ref{compatible} we know that $\tau(r)=\tau{([r])}$ thus, we have $\frac{\tau{(f)}}{2^{n}}=\int_{\{f\}}\tau d\nu_n=\int_{\{r\mid[r]=f\}}\tau d\mu_n=\int_{N^n_f}\tau d\mu_n$ for any $f\in P_n$. In particular, $\rho(n)=\frac{\sum_{f\in P_n }\tau(f)}{2^{n}}=\int_{Q_n}\tau d\mu_n$.
\begin{defn}
	Define $p(r)_k=[S^k(r)](1)$ for $r\in\FormalLaurent$.
	Let $\intervalN{a}{b}=[a,b)\cap\N$ and $\intervalC{a}{b}=[a,b]\cap\N$  for any $a,b\in\N$. The \textit{parity sequence of $r$ up to $n$} is the sequence  $\mathbf{p}^{r,n}\in\F_2^{\intervalN{0}{n}}$ defined by $\mathbf{p}^{r,n}_k=p(r)_k$ for all $0\leq k<n$.
\end{defn}
\begin{lem}\label{modulo}
	Suppose that $r\in \FormalLaurent$, $k\in\N$, and $f\in F_2[x]$ such that $f(1)=1$. Then for all $0\leq i< k$ we have $p(r)_i=p(r+(1+x)^kf)_i$, but $p(r)_k\neq p(r+(1+x)^kf)_k$. 
\end{lem}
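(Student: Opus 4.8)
The plan is to set $r' = r + (1+x)^k f$ and prove by a single induction on $i$ that the difference $S^i(r') - S^i(r)$ stays a polynomial of a very rigid form while the parities of $r$ and $r'$ remain synchronized. The starting point is the one-step formula for $S$: since $S(t) = x^{[t](1)-1}P(t) = x^{[t](1)}\tfrac{t}{x+1}$ and $p(s)_i = [S^i(s)](1)$, one has $S^{i+1}(s) = x^{p(s)_i}\tfrac{S^i(s)}{x+1}$ for every $s \in \FormalLaurent$. Writing $m_i = \sum_{j<i} p(r)_j$, the claim I would prove for every $0 \le i \le k$ is
\[
  S^i(r') - S^i(r) = x^{m_i}(1+x)^{k-i} f ,
\]
together with $p(r')_i = p(r)_i$ whenever $i < k$.

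For the base case $i = 0$ both sides equal $(1+x)^k f$, using that we are in characteristic $2$. For the inductive step I assume the displayed identity at some $i < k$. Because $i \le k$, the right-hand side $x^{m_i}(1+x)^{k-i} f$ is a genuine polynomial, so it equals its own polynomial part, and by additivity of the polynomial part (Lemma \ref{gathering}, part $3$) I get $[S^i(r')] = [S^i(r)] + x^{m_i}(1+x)^{k-i} f$. Evaluating at $x = 1$ and noting that $(1+1)^{k-i} = 0$ in $\F_2$ for $i < k$ gives $p(r')_i = [S^i(r')](1) = [S^i(r)](1) = p(r)_i$; denote this common parity by $\epsilon_i$. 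Feeding the same $\epsilon_i$ into the one-step formula for both $r$ and $r'$ and subtracting, I would divide the difference $x^{m_i}(1+x)^{k-i} f$ by $x+1$ --- which is legitimate and keeps it polynomial precisely because $k - i \ge 1$ --- to obtain $S^{i+1}(r') - S^{i+1}(r) = x^{m_i + \epsilon_i}(1+x)^{k-i-1} f = x^{m_{i+1}}(1+x)^{k-(i+1)} f$, completing the step.

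The lemma then falls out of the endpoint $i = k$. There the identity reads $S^k(r') - S^k(r) = x^{m_k} f$, again a polynomial, so $[S^k(r')] = [S^k(r)] + x^{m_k} f$, and evaluating at $1$ yields $p(r')_k = p(r)_k + f(1) = p(r)_k + 1$, which differs from $p(r)_k$ since $f(1) = 1$. The only place requiring care --- and the main obstacle --- is the bookkeeping of the exponent $m_i$: the induction hypothesis $p(r')_i = p(r)_i$ is exactly what guarantees that $r$ and $r'$ pick up the same power of $x$ at step $i$, so that their difference retains its clean factored form; it is then the surviving factor $(1+x)^{k-i}$, vanishing at $x = 1$ for all $i < k$ but equal to $1$ at $i = k$, that drives both halves of the statement simultaneously.
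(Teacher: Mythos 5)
Your proof is correct and is essentially the paper's argument in unrolled form: the paper inducts on $k$, passing from $(r,f)$ to $(S(r),\,x^{[r](1)}f)$ at each step, while you run the identical one-step computation as an induction on $i$ with the explicit invariant $S^i(r+(1+x)^k f)+S^i(r)=x^{m_i}(1+x)^{k-i}f$. Both proofs rest on the same observation, that one application of $S$ strips a single factor of $(1+x)$ from the difference while the parities remain synchronized exactly as long as the surviving factor $(1+x)^{k-i}$ still vanishes at $x=1$.
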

\begin{proof}
	We proceed by induction on $k$. If $k=0$ then $$p(r+f)_0=[r+f](1)=[r](1)+[f](1)=[r](1)+1\neq [r](1)= p(r)_0.$$ Suppose we know the lemma for $0\leq k-1$. First, we have $$p(r)_0=[r](1)=[r+(1+x)^kf](1)=p(r+(1+x)^kf)_0.$$ Thus, $$S(r+(1+x)^kf)=\frac{x^{[r](1)}}{x+1}(r+(1+x)^kf)=\frac{x^{[r](1)}}{x+1}(r)+(1+x)^{k-1}x^{[r](1)}f=S(r)+(1+x)^{k-1}x^{[r](1)}f$$ and $\left(x^{[r](1)}f\right)(1)=1\cdot f(1)=1$. By induction hypothesis we have $p(S(r))_i=p(S(r+(1+x)^kf))_i$ for $0\leq i< k-1$ and $p(S(r))_{k-1}\neq p(S(r+(1+x)^kf))_{k-1}$. The induction step is complete by observing that $p(S(r))_i=p(r)_{i+1}$ by definition.
\end{proof}
\begin{lem}\label{prob}
	For every $\mathbf{p}\in\F_2^{\intervalN{0}{n}}$ the $\mu_n$-measure of the set $\{r\in Q_n\mid \mathbf{p}^{r,n}=\mathbf{p}\}$ equals $\frac{1}{2^n}$.
\end{lem}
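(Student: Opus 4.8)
The plan is to prove a stronger ``conditional halving'' statement by induction on the length of a prefix. For $0 \le k \le n$ and $\mathbf{q} \in \F_2^{\intervalN{0}{k}}$, set $C_{\mathbf{q}} = \{r \in Q_n \mid \mathbf{p}^{r,k} = \mathbf{q}\}$; I claim $\mu_n(C_{\mathbf{q}}) = 2^{-k}$ for every such $\mathbf{q}$. The case $k = 0$ is trivial, since then $C_{\mathbf{q}} = Q_n$ and $\mu_n(Q_n) = 1$, and the case $k = n$ is exactly the assertion of the lemma.

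The mechanism driving the induction is a family of measure-preserving involutions, one for each bit position, that flips precisely that bit of the parity sequence while leaving all earlier bits fixed. For $0 \le k < n$ define $\phi_k \colon \FormalLaurent \to \FormalLaurent$ by $\phi_k(r) = r + (1+x)^k$. The first thing I would verify is that $\phi_k$ restricts to a $\mu_n$-preserving bijection of $Q_n$. Because $\deg\big((1+x)^k\big) = k < n$, adding $(1+x)^k$ leaves the coefficient of $x^n$ untouched, so $\deg(\phi_k(r)) = n$ whenever $\deg(r) = n$; and $\phi_k \circ \phi_k = \mathrm{id}$ since we are in characteristic $2$, so $\phi_k$ is a bijection of $Q_n$. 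Moreover $\phi_k$ is translation by the fixed element $(1+x)^k \in Q_{\leq n}$, hence preserves the Haar measure $\mu_{\leq n}$; as it maps $Q_n$ onto $Q_n$ it also preserves the normalized restriction $\mu_n = 2\mu_{\leq n}|_{Q_n}$.

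The key combinatorial property of $\phi_k$ is supplied directly by Lemma \ref{modulo}, applied with $f = 1$ (so that $f(1) = 1$) and $(1+x)^k f = (1+x)^k$: for every $r \in \FormalLaurent$ one has $p(r)_i = p(\phi_k(r))_i$ for all $0 \le i < k$, while $p(r)_k \ne p(\phi_k(r))_k$. Thus $\phi_k$ fixes the first $k$ parity bits and flips the $k$-th. Consequently, for $\mathbf{q} \in \F_2^{\intervalN{0}{k}}$ the involution $\phi_k$ maps $C_{\mathbf{q}}$ onto itself and interchanges its two disjoint subsets $C_{\mathbf{q}}^{b} = \{r \in C_{\mathbf{q}} \mid p(r)_k = b\}$ for $b \in \F_2$. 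Since $\phi_k$ preserves $\mu_n$, we obtain $\mu_n(C_{\mathbf{q}}^{0}) = \mu_n(C_{\mathbf{q}}^{1}) = \tfrac12 \mu_n(C_{\mathbf{q}})$. But each $C_{\mathbf{q}}^{b}$ is exactly the length-$(k+1)$ prefix-fiber $C_{\mathbf{q}'}$ for the extension $\mathbf{q}'$ of $\mathbf{q}$ by $b$, so assuming $\mu_n(C_{\mathbf{q}}) = 2^{-k}$ for all length-$k$ prefixes gives $\mu_n(C_{\mathbf{q}'}) = 2^{-(k+1)}$ for all length-$(k+1)$ prefixes, completing the induction.

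The step I expect to demand the most attention is the claim that each $\phi_k$ is a genuine $\mu_n$-preserving bijection of $Q_n$: this is where the constraint $k < n$ is essential, ensuring simultaneously that the leading term (hence membership in $Q_n$) is undisturbed and that $(1+x)^k$ belongs to the compact group $Q_{\leq n}$ carrying the Haar measure that defines $\mu_n$. Granting that, the halving is a formal consequence of Lemma \ref{modulo} together with the symmetry provided by the involution, and no further computation is needed.
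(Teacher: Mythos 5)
Your proof is correct, but it takes a genuinely different route from the paper's. The paper argues by counting and pigeonhole: Lemma \ref{modulo} shows that $f\mapsto\mathbf{p}^{f,n}$ is injective on $\{0\}\cup\bigcup_{k=0}^{n-1}P_k$, hence a bijection onto $\F_2^{\intervalN{0}{n}}$ since both sets have $2^n$ elements; for each $\mathbf{p}$ the unique preimage $f$ yields $(x+1)^n+f\in Q_n$ with parity sequence $\mathbf{p}$, so (via Lemma \ref{compatible}, because the parity sequence of $r$ depends only on $[r]$) the fiber of $\mathbf{p}$ contains the cylinder $N^n_{(x+1)^n+f}$ and has measure at least $2^{-n}$; as the $2^n$ disjoint fibers cover $Q_n$ and $\mu_n$ is a probability measure, each has measure exactly $2^{-n}$. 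You instead exploit the group structure of $Q_{\leq n}$ directly: the translations $r\mapsto r+(1+x)^k$, $k<n$, are $\mu_n$-preserving involutions of $Q_n$ which, by the special case $f=1$ of Lemma \ref{modulo}, fix the first $k$ parity bits and flip the $k$-th, and conditional halving by induction does the rest. Both proofs have Lemma \ref{modulo} as their combinatorial core, and both ultimately rest on translation invariance of the Haar measure $\mu_{\leq n}$ --- but in the paper this is used only inside Definition \ref{defmeasure} to compute $\mu_n(N^n_f)$, whereas you invoke it for the maps $\phi_k$ themselves and bypass the cylinders $N^n_f$ almost entirely. What the paper's route buys is a structural byproduct your argument never produces: every parity sequence of length $n$ is realized by exactly one polynomial in $\{0\}\cup\bigcup_{k<n}P_k$, the polynomial analogue of the classical bijectivity of the parity-vector map for the Collatz function modulo $2^n$. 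What your route buys is freedom from any enumeration, plus a clean isolation of the probabilistic content: each bit is flipped by a measure-preserving involution fixing all earlier bits, which is exactly the statement that the bits are i.i.d.\ fair coins under $\mu_n$. One small point you should make explicit: the measurability of your prefix fibers $C_{\mathbf{q}}$ (needed before translation invariance can be applied to them) follows from Lemma \ref{compatible}, since $\mathbf{p}^{r,n}=\mathbf{p}^{[r],n}$ shows that $C_{\mathbf{q}}$ is a finite union of cylinders $N^n_g$ with $g\in P_n$, hence clopen.
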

\begin{proof}
First we show that  $f\in \{0\}\cup\bigcup_{k=0}^{n-1}P_{k}$ is uniquely determined by $\mathbf{p}^{f,n}$. Suppose that $f,g\in \{0\}\cup\bigcup_{k=0}^{n-1}P_{k}$ and $f\neq g$. We can write $g-f=(x+1)^kh$ with $0\leq k<n$ and $h\in\bigcup_{k=0}^{n-1}P_{k}$ with $h(1)=1$. Thus, we have $f=g+(x+1)^kh$ and by Lemma \ref{modulo} we conclude $p(g)_k\neq p(f)_k$, thus, $\mathbf{p}^{f,n}$ for $f\in \{0\}\cup\bigcup_{k=0}^{n-1}P_{n}$ are pairwise distinct. As there are exactly $2^n$ parity sequences up to $n$ and $\#(\{0\}\cup\bigcup_{k=0}^{n-1}P_k)=2^n$, for any given $\mathbf{p}\in\F_2^{\intervalN{0}{n}}$ we find exactly one $f\in\{0\}\cup\bigcup_{k=0}^{n-1}P_k$ with $\mathbf{p}^{f,n}=\mathbf{p}$. Then $(x+1)^n+f\in Q_n$ has the same parity sequence up to $n$ by Lemma \ref{modulo}.
 	By Lemma \ref{compatible} we have $[S^j(r)]=T^j([r])=[S^j([r])]$, thus, $\mathbf{p}^{r,n}=\mathbf{p}^{[r],n}$. Hence,
 	 $\mu_n\left(\left\{r\in Q_n\mid  \mathbf{p}^{r,n}=\mathbf{p}\right\}\right)\geq\mu_n(N^n_{(x+1)^n+f})=\frac{1}{2^n}$. But we have equality because this holds for every $\mathbf{p}\in\F_2^{\intervalN{0}{n}}$ and there are exactly $2^n$ of them - which completes the proof.  
\end{proof}
\begin{lem}\label{iterationS}
	We have $S^k(r)=\frac{x^{\sum_{i=0}^{k-1}p(r)_i}}{(x+1)^k}r$.
\end{lem}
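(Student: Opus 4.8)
The plan is to prove the identity by induction on $k$. The crucial preliminary observation, which I would record first, is that the defining recipe $S(r)=x^{[r](1)-1}P(r)$ together with $P(r)=\frac{x}{x+1}r$ collapses into a single multiplication: $S(r)=x^{[r](1)-1}\cdot\frac{x}{x+1}r=\frac{x^{[r](1)}}{x+1}r$. In other words, applying $S$ is nothing but multiplying by the element $\frac{x^{[r](1)}}{x+1}\in\FormalLaurent$. Since by definition $p(r)_0=[S^0(r)](1)=[r](1)$, this can be written as $S(r)=\frac{x^{p(r)_0}}{x+1}r$, which is exactly the asserted formula in the case $k=1$.

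With this in hand the induction is immediate. For the base case $k=0$ the right-hand side equals $\frac{x^{0}}{(x+1)^{0}}r=r=S^0(r)$, the exponent being the empty sum $\sum_{i=0}^{-1}p(r)_i=0$. For the inductive step I would assume the formula for some $k$ and apply $S$ once more to the element $s:=S^k(r)$. Using the multiplicative form of $S$ gives $S^{k+1}(r)=S(s)=\frac{x^{[s](1)}}{x+1}s$. The one point that must be invoked at precisely the right moment is that $[s](1)=[S^k(r)](1)=p(r)_k$, directly from the definition of the parity $p(r)_k$. Substituting the inductive hypothesis $s=\frac{x^{\sum_{i=0}^{k-1}p(r)_i}}{(x+1)^k}r$ then yields $S^{k+1}(r)=\frac{x^{p(r)_k}}{x+1}\cdot\frac{x^{\sum_{i=0}^{k-1}p(r)_i}}{(x+1)^k}r=\frac{x^{\sum_{i=0}^{k}p(r)_i}}{(x+1)^{k+1}}r$, completing the induction.

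I expect no genuine obstacle in this lemma: it is a bookkeeping argument whose entire content is that the successive exponents of $x$ accumulated by iterating $S$ are exactly the partial sums of the parity sequence. The only care required is to rewrite $S$ as scalar multiplication before iterating, to feed in the definition $p(r)_k=[S^k(r)](1)$ so that the exponent produced at step $k$ matches $p(r)_k$, and to interpret the empty sum in the base case correctly. The factor $(x+1)^{-k}$ simply piles up one copy per iteration.
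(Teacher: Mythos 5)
Your proof is correct and is essentially identical to the paper's: both proceed by induction on $k$, using the multiplicative form $S(s)=\frac{x^{[s](1)}}{x+1}s$ together with the definition $p(r)_k=[S^k(r)](1)$ to absorb one factor per iteration. Your write-up merely makes explicit two small points the paper leaves implicit (the collapse of $S(r)=x^{[r](1)-1}P(r)$ into a single multiplication, and the empty-sum reading of the base case).
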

\begin{proof}
	We proceed by induction. Obviously $S^0(r)=1\cdot r$. If $S^k(r)=\frac{x^{\sum_{i=0}^{k-1}p(r)_i}}{(x+1)^k}r$ then we obtain $S^{k+1}(r)=\frac{x^{[S^k(r)](1)}}{x+1}S^k(r)=\frac{x^{[S^k(r)](1)}}{x+1}\frac{x^{\sum_{i=0}^{k-1}p(r)_i}}{(x+1)^k}r=\frac{x^{\sum_{i=0}^{k}p(r)_i}}{(x+1)^{k+1}}r$.
\end{proof}
\begin{lem}\label{InvarianceMultiplication}
	Suppose that $k,l\in\Z$ and $n\in\N$ with $n+k-l\geq 0$. Then the push-forward of $\mu_n$ under the map $r\mapsto\frac{x^k}{(x+1)^l}r$ from $Q_n$ to $Q_{n+k-l}$ is $\mu_{n+k-l}$.
\end{lem}
\begin{proof}
We show the slightly stronger statement that if $s\in\FormalLaurent$, $n,m\geq 0$ and $n+\deg(s)=m$, then the push-forward of $\mu_n$ under the map $r\mapsto sr$ from $Q_n$ to $Q_{m}$ is $\mu_{m}$. With the notation of Definition \ref{defmeasure} we first note that $r\mapsto sr$ from $Q_{\leq n}$ to $Q_{\leq m}$ is a group isomorphism (the inverse is given by $r\mapsto \frac{r}{s}$). Thus, the push-forward of $2\mu_{\leq n}$ under this isomorphism is  $2\mu_{\leq m}$ by the uniqueness of the Haar-measure up to a constant. Furthermore, $s\cdot Q_n=Q_m$ and the claim follows.

Alternatively, one could go through the definitions in order to get an elementary proof: It suffices to show the claim for $k=1,l=0$ and $k=0,l=-1$. For $k=1,l=0$ note that $\{r\in Q_{n}\mid xr\in N^{n+1}_f\}=N^n_f$ and $\mu_{n+1}(N^{n+1}_f)=\mu_{n}(N^{n}_f)=\frac{1}{2^{\deg(f)}}$ for any $f\in\F_2[x]\setminus\{0\}$. Thus, the claim follows by the definition of the push-forward measure and the fact that a measure is determined by its values on the sets of the form $N^{n+1}_f$.
	For $k=0,l=-1$ we show that $\{r\in Q_{n}\mid (x+1)r\in N^{n+1}_f\}=N^n_{[\frac{xf}{x+1}]}$. We have $(x+1)r\in N^{n+1}_{f}$ iff $[(x+1)rx^{-n-1+\deg(f)}]=f$ iff $[rx^{-n+\deg(f)}]=[\frac{x}{x+1}f]$ iff $r\in N^n_{[\frac{xf}{x+1}]}$. To see the second ``iff'' note that $[(x+1)rx^{-n-1+\deg(f)}]=f=[f]$, iff $[P((x+1)rx^{-n-1+\deg(f)})]=[P[f]]$ iff $[rx^{-n+\deg(f)}]=[\frac{fx}{x+1}]$ by $6.$ of Lemma \ref{gathering}.
	Thus, the claim follows as in the previous case by noting that $\deg(f)=\deg\left(\left[\frac{xf}{x+1}\right]\right)$.
\end{proof}
\begin{lem}\label{Hoeffding}
	Suppose that $\epsilon>0$. Then $$\mu_n\left(\left\{r\in Q_n\mid \sum_{k=0}^{n-1}p(r)_k\geq n(\frac{1}{2}+\epsilon)\right\}\right)\leq e^{-2\epsilon^2n}$$ and  $$\mu_n\left(\left\{r\in Q_n\mid \sum_{k=0}^{n-1}p(r)_k\leq n(\frac{1}{2}-\epsilon)\right\}\right)\leq e^{-2\epsilon^2n}.$$
\end{lem}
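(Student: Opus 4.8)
The plan is to recognize the random quantity $\sum_{k=0}^{n-1}p(r)_k$ as a sum of $n$ independent, identically distributed Bernoulli$(\tfrac12)$ variables on the probability space $(Q_n,\mu_n)$, and then to invoke the classical Hoeffding inequality. The essential input has in fact already been provided by Lemma \ref{prob}: it asserts that for every fixed $\mathbf{p}\in\F_2^{\intervalN{0}{n}}$ the set $\{r\in Q_n\mid \mathbf{p}^{r,n}=\mathbf{p}\}$ has $\mu_n$-measure $2^{-n}$. In probabilistic language this says precisely that the random vector $r\mapsto(p(r)_0,\dots,p(r)_{n-1})$ is uniformly distributed on $\{0,1\}^n$. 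Since the uniform law on $\{0,1\}^n$ factors as the $n$-fold product of the uniform law on $\{0,1\}$, the coordinate functions $p(r)_0,\dots,p(r)_{n-1}$ are mutually independent, each taking the values $0$ and $1$ with probability $\tfrac12$.

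First I would make the reduction explicit: writing $X_k(r)=p(r)_k$, each $X_k$ takes values in $\{0,1\}\subseteq[0,1]$ with $\int_{Q_n}X_k\,d\mu_n=\tfrac12$, so $S=\sum_{k=0}^{n-1}X_k$ has mean $\tfrac{n}{2}$. Then I would apply Hoeffding's inequality to the independent bounded variables $X_0,\dots,X_{n-1}$: taking $a_k=0$ and $b_k=1$ gives $\sum_{k=0}^{n-1}(b_k-a_k)^2=n$, so for any $t>0$ one obtains $\mu_n(\{r\in Q_n\mid S(r)-\tfrac{n}{2}\geq t\})\leq\exp(-2t^2/n)$. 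Choosing $t=n\epsilon$ yields $\exp(-2n^2\epsilon^2/n)=e^{-2\epsilon^2 n}$, which is exactly the first displayed bound.

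For the lower-tail bound I would run the identical argument on the variables $1-X_k$, which are again independent and take values in $[0,1]$ with mean $\tfrac12$; the event $\{S\leq n(\tfrac12-\epsilon)\}$ coincides with $\{\sum_{k=0}^{n-1}(1-X_k)\geq n(\tfrac12+\epsilon)\}$ and is therefore also bounded by $e^{-2\epsilon^2 n}$ by the computation just performed. I do not anticipate a genuine obstacle here: once Lemma \ref{prob} is in hand the whole content reduces to the standard concentration of a Binomial$(n,\tfrac12)$ random variable, and the only point deserving care is the bookkeeping that converts the measure-theoretic statement of Lemma \ref{prob} into the independence hypothesis required by Hoeffding's inequality. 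The substantive work --- establishing that the parity bits behave like fair, independent coin flips --- has already been carried out in the preceding lemmas.
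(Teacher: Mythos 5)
Your proposal is correct and follows essentially the same route as the paper: the paper likewise observes that Lemma \ref{prob} says exactly that the push-forward of $\mu_n$ under $r\mapsto\mathbf{p}^{r,n}$ is the uniform measure on $\{0,1\}^{\intervalN{0}{n}}$, and then applies Hoeffding's inequality (in its two-sided form, rather than via your $1-X_k$ symmetrization, which is an equivalent cosmetic difference). Your added bookkeeping --- identifying the coordinates as independent Bernoulli$(\tfrac12)$ variables and choosing $t=n\epsilon$ --- is exactly the translation the paper leaves implicit.
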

\begin{proof}
 We will use Hoeffding's inequality, which states that if $\lambda_n$ is the uniform measure on $\left\{0,1\right\}^{\intervalN{0}{n}}$, then \begin{equation}\label{Hoeffding1}
		\lambda_{n}\left(\left\{x\in\left\{0,1\right\}^{\intervalN{0}{n}}\mid\sum_{k=0}^{n-1}x_k-\frac{n}{2}\geq\epsilon n\right\}\right)\leq e^{-2\epsilon^2n}.\end{equation}
	and
	\begin{equation}\label{Hoeffding2}
		\lambda_{n}\left(\left\{x\in\left\{0,1\right\}^{\intervalN{0}{n}}\mid\sum_{k=0}^{n-1}x_k-\frac{n}{2}\leq-\epsilon n\right\}\right)\leq e^{-2\epsilon^2n}.\end{equation}(see \cite{Hoeffding}).
		 Now, Lemma \ref{prob} states that the push-forward of $\mu_n$ under the map $r\mapsto \mathbf{p}^{r,n}$ is the uniform measure on $\left\{0,1\right\}^{\intervalN{0}{n}}$ and thus, we can apply Hoeffding's inequality to obtain the claim.
\end{proof}
\begin{prop}\label{upper}
	Suppose that $\epsilon>0$. Then there exists $n_0\in\N$ such that $\int_{Q_n}\tau d\mu_n\leq 2(1+\epsilon)n$ for all $n\geq n_0$. 
\end{prop}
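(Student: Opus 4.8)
The plan is to compute the tail probabilities $\mu_n(\{\tau>m\})$ and sum them. Since $\tau$ is a nonnegative integer-valued function, $\int_{Q_n}\tau\,d\mu_n=\sum_{m\ge 0}\mu_n(\{r\in Q_n:\tau(r)>m\})$. By Lemma \ref{iterationS} we have $\deg(S^m(r))=n+\sum_{i=0}^{m-1}p(r)_i-m=n-z_m(r)$, where $z_m(r)=\#\{0\le i<m:p(r)_i=0\}$ counts the halving steps. Because $\deg(S^m(r))\ge 0$ always, and $\deg(S^m(r))=0$ is equivalent to $[S^m(r)]=1$ (immediate from the definition of $[\cdot]$), we get $\tau(r)>m\iff \deg(S^m(r))\ge 1\iff z_m(r)\le n-1$. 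Splitting $Q_n$ into the parity cylinders $C_{\mathbf p}=\{r\in Q_n:\mathbf p^{r,m}=\mathbf p\}$, this yields
\[
\mu_n(\{\tau>m\})=\sum_{\substack{\mathbf p\in\F_2^{\intervalN{0}{m}}\\ \#\{i:\mathbf p_i=0\}\le n-1}}\mu_n(C_{\mathbf p}).
\]

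The crux is to show that every such cylinder has measure exactly $2^{-m}$; this extends Lemma \ref{prob} beyond length $n$, but only for the prefixes that have not yet been absorbed, i.e.\ those with at most $n-1$ zeros. I would prove it by induction on the length, the inductive step being a bit-flipping involution. Fix a prefix $\mathbf q\in\F_2^{\intervalN{0}{k}}$ with fewer than $n$ zeros and write $s=\sum_{i<k}\mathbf q_i$ for its number of ones. I claim there is $g\in\FormalLaurent$ with $\deg(g)<n$ and $(1+x)$-adic valuation $k$ such that the translation $r\mapsto r+g$ maps $C_{\mathbf q}$ onto itself and exchanges $\{p_k=0\}$ with $\{p_k=1\}$. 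Writing $g=(1+x)^k h$ and using Lemma \ref{iterationS}, on $C_{\mathbf q}$ we have $S^k(r+g)=S^k(r)+x^{s}h$, so the $k$-th bit flips precisely when $[x^{s}h](1)=1$, while (by the Laurent-series analogue of Lemma \ref{modulo}) the earlier bits are unchanged. The condition $\deg(g)<n$ forces $\deg(h)<n-k$, and the non-absorption hypothesis $s=k-z_k>k-n$ is exactly what leaves room to choose $h$ with $\deg(x^{s}h)\ge 0$ and $[x^{s}h](1)=1$. Since $\deg(g)<n$, the leading coefficient $x^n$ is untouched, so the translation preserves $Q_n$, and as a translation on the compact group $Q_{\le n}$ it preserves the Haar measure, hence $\mu_n$ (Definition \ref{defmeasure}). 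Therefore $\mu_n(C_{(\mathbf q,0)})=\mu_n(C_{(\mathbf q,1)})=\tfrac12\mu_n(C_{\mathbf q})$, and the induction gives $\mu_n(C_{\mathbf p})=2^{-m}$ for every non-absorbed $\mathbf p$ of length $m$.

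The main obstacle is precisely this involution: one must juggle two independent valuations — the degree at infinity, which governs membership in $Q_n$, and the $(1+x)$-adic valuation at $x=1$, which governs which parity bit is affected — and verify that non-absorption ($z_k<n$) is exactly the condition under which a flipping element of degree $<n$ exists. For absorbed prefixes the same computation gives $[x^{s}h]=0$, i.e.\ no flip, reflecting that $p_k$ is then forced to equal $1$. Granting the claim, counting admissible prefixes yields
\[
\mu_n(\{\tau>m\})=2^{-m}\sum_{z=0}^{n-1}\binom{m}{z}=\Pr\bigl[\mathrm{Bin}(m,\tfrac12)\le n-1\bigr].
\]
To finish, fix $m_0=\lceil 2(1+\tfrac{\epsilon}{2})n\rceil$ and split the sum: the terms with $m<m_0$ contribute at most $m_0$, while for $m\ge m_0$ the threshold $n-1$ lies a distance of order $m$ below the mean $m/2$, so Hoeffding's inequality (the form on $\{0,1\}^{\intervalN{0}{m}}$ quoted in the proof of Lemma \ref{Hoeffding}) bounds $\mu_n(\{\tau>m\})$ by $e^{-2(m/2-n+1)^2/m}$, whose tail sum over $m\ge m_0$ tends to $0$. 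Thus $\int_{Q_n}\tau\,d\mu_n\le m_0+o(1)\le 2(1+\epsilon)n$ for all sufficiently large $n$, which is the assertion.
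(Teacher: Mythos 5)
Your reduction of the proposition to tail probabilities is correct as far as it goes: by Lemma \ref{iterationS} indeed $\deg(S^m(r))=n-z_m(r)$, the degree never becomes negative, and so $\tau(r)>m$ if and only if $z_m(r)\leq n-1$; the concluding Hoeffding computation would also be fine. But the crux of your argument --- that every ``non-absorbed'' parity cylinder of length $m>n$ has $\mu_n$-measure $2^{-m}$ --- is false, and the proposed flipping involution cannot exist. By Lemma \ref{compatible} we have $p(r)_j=T^j([r])(1)$, so the \emph{entire infinite} parity sequence of $r$ depends only on the polynomial part $[r]$. For $r\in Q_n$ the cylinder sets of length $n$ are exactly the sets $N^n_g$, $g\in P_n$, of Definition \ref{defmeasure} (this is what the proof of Lemma \ref{prob} shows: the map $g\mapsto \mathbf{p}^{g,n}$ is a bijection from $P_n$ onto $\F_2^{\intervalN{0}{n}}$), so the first $n$ bits already determine $[r]$, and every bit $p(r)_k$ with $k\geq n$ is a deterministic function of the first $n$ bits. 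Hence no measure-preserving translation can flip bit $k\geq n$ while fixing bits $0,\dots,n-1$. One can also see the absurdity directly: since $\tau(r)=\tau([r])$ and $[r]$ ranges over the finite set $P_n$, $\tau$ is bounded on $Q_n$ (indeed $\tau\leq 2n^{1.5}+1$ by Theorem \ref{Upperbound}), whereas your formula $\mu_n(\{\tau>m\})=2^{-m}\sum_{z=0}^{n-1}\binom{m}{z}\geq 2^{-m}>0$ would make $\tau$ unbounded in measure. The smallest case is already a counterexample: for $n=1$, every $r\in Q_1$ with $p(r)_0=1$ has $[r]=x$, $T(x)=x+1$, so $p(r)_1=0$ deterministically and $\mu_1(C_{(1,1)})=0$, not $\tfrac14$.

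The step where your construction silently fails is the ``Laurent-series analogue of Lemma \ref{modulo}'': that lemma genuinely requires $f$ to be a \emph{polynomial}, because taking polynomial parts does not commute with multiplication by $(1+x)$ on series with negative-degree terms (also note $(1+x)$ is a unit in $\FormalLaurent$, so ``$(1+x)$-adic valuation'' is not even well defined there). For example $g=(1+x)x^{-1}=1+x^{-1}$ has $[g]=1$, so translation by $g$ flips $p_0$ rather than $p_1$. Your constraint $\deg(g)<n$ together with $k\geq n$ forces $h$ to have negative-degree terms, which is precisely the regime where preservation of the earlier bits breaks down --- consistently with the structural obstruction above. This is why the paper does not attempt a single i.i.d. model for the whole orbit (which correctly predicts the answer $2n$ but is only a heuristic beyond time $n$): instead it uses Lemma \ref{prob} only up to the current degree, and \emph{restarts} the randomness at geometrically shrinking scales $k_{i+1}\approx(\tfrac12+\delta)k_i$ via the push-forward invariance of Lemma \ref{InvarianceMultiplication} and the monotonicity $\preccurlyeq_x$ of Lemma \ref{shift}, absorbing the exceptional sets with the $O(n^{1.5})$ worst-case bound. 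To repair your proof you would need some substitute for that block-restart mechanism; the uniform-cylinder claim itself is unsalvageable.
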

\begin{proof}
	Choose $0<\delta<\frac{1}{2}$ and $\epsilon^\prime<\epsilon$ and fix $c>0$ such that $\tau(r)\leq c(\deg(r))^{1.5}$ for all $r\in\FormalLaurent^+$. Fix $n\in\N$. For $k\leq n$ define $$A_k=\left\{r\in Q_k\mid \sum_{i=0}^{k-1}p(r)_i\geq k\left(\frac{1}{2}+\delta\right)\right\},$$ then $\mu_k(A_k)\leq e^{-2\delta^2k}$ by Lemma \ref{Hoeffding}.
	 Recursively define $(k_0,j_0)=(n,0)$ and $$(k_{i+1},j_{i+1})=\left(\left\lfloor\left(\frac{1}{2}+\delta\right)k_i\right\rfloor,k_i+j_i\right).$$ Note that for $M>0$ we have $\left\lfloor\left(\frac{1}{2}+\delta\right)M\right\rfloor<M$ since $\delta<\frac{1}{2}$ -- thus, $k_{i+1}<k_i$ as long as $k_i>0$. Let $i_0$ be minimal such that $k_{i_0}<\sqrt{n}$. \\ 
	 Put $$B_i=\frac{(x+1)^{j_i}}{x^{j_{i+1}-n}}A_{k_i}\subseteq Q_n.$$ By Lemma \ref{InvarianceMultiplication} we have $$\mu_n(B_i)=\mu_{k_i}(A_{k_i})\leq e^{-2\delta^2k_i}.$$ Put $B^n=\bigcup_{i=0}^{i_0}B_i$ and put $G^n=Q_n\setminus B^n$. If $r\in G^n$ we show recursively that
	 $$S^{j_{i}}(r)\preccurlyeq_x\frac{x^{j_{i+1}-n}}{(x+1)^{j_{i}}}r$$ for $i\leq i_0$. 
	  This is certainly true for $i=0$. Suppose we have 
	   $$S^{j_{i}}(r)\preccurlyeq_x\frac{x^{j_{i+1}-n}}{(x+1)^{j_{i}}}r,$$ then, as $$\frac{x^{j_{i+1}-n}}{(x+1)^{j_{i}}}r\in \frac{x^{j_{i+1}-n}}{(x+1)^{j_{i}}}G^n\subseteq Q_{k_i}\setminus A_{k_i},$$ we obtain by hypothesis, Lemma \ref{shift}, Lemma \ref{iterationS}, and the definition of $A_{k_i}$
	  \begin{align*}S^{j_{i+1}}(r)=S^{j_i+k_i}(r)&=S^{k_i}(S^{j_i}(r))\preccurlyeq_x S^{k_i}\left(\frac{x^{j_{i+1}-n}}{(x+1)^{j_{i}}}r\right)\\=\frac{x^{\sum_{l=0}^{k_i-1}p\left(\frac{x^{j_{i+1}-n}}{(x+1)^{j_{i}}}r\right)_l}}{(x+1)^{j_i+k_i}}x^{j_{i+1}-n}r&\preccurlyeq_x \frac{x^{\left\lfloor\left(\frac{1}{2}+\delta\right)k_i\right\rfloor +j_{i+1}-n}}{(x+1)^{j_i+k_i}}r=\frac{x^{j_{i+2}-n}}{(x+1)^{j_{i+1}}}r.\end{align*}
	  In particular, $$S^{j_{i_0}}(r)\preccurlyeq_x\frac{x^{j_{i_0+1}-n}}{(x+1)^{j_{i_0}}}r.$$ 
	  Now, $$\deg\left(\frac{x^{j_{i_0+1}-n}}{(x+1)^{j_{i_0}}}r\right)=j_{i_0+1}-n+n-j_{i_0}=k_{i_0}<\sqrt{n}.$$ Thus, $\tau(r)\leq j_{i_0}+cn^{\frac{3}{4}}$ and $$j_{i_0}=\sum_{i=0}^{i_0-1}k_i\leq n\sum_{i=0}^{i_0-1} \left(\frac{1}{2}+\delta\right)^i\leq \frac{n}{\frac{1}{2}-\delta},$$ thus, $\tau(r)\leq \frac{2n}{1-2\delta}+cn^{\frac{3}{4}}\leq (2+\epsilon^\prime)n$ when $\delta>0$ is small enough  so that $\frac{2}{1-2\delta}<2+\epsilon^\prime$and then $n$ is large enough so that $\frac{2n}{1-2\delta}+cn^{\frac{3}{4}}<(2+\epsilon^\prime)n$. We are almost done. We just need to estimate what happens when $r\in B^n$. In that case we only know $\tau(r)\leq cn^{1.5}$. But this is enough as $B^n$ has small measure: Note that $$k_{i_0}\geq k_{i_0-1}\left(\frac{1}{2}+\delta\right)-1\geq \sqrt{n}\left(\frac{1}{2}+\delta\right)-1,$$ as $i_0$ is minimal such that $k_{i_0}<\sqrt{n}$. Thus, $$\mu_n(B^n)\leq \sum_{k=\lfloor\sqrt{n}(\frac{1}{2}+\delta)-1\rfloor}^{\infty}e^{-2\delta^2k}=e^{-2\delta^2\lfloor\sqrt{n}(\frac{1}{2}+\delta)-1\rfloor}\sum_{k=0}^{\infty}e^{-2\delta^2k}.$$ Hence, $$n^{1.5}\mu_n(B^n)\leq n^{1.5}e^{-2\delta^2\lfloor\sqrt{n}(\frac{1}{2}+\delta)-1\rfloor}\sum_{k=0}^{\infty}e^{-2\delta^2k},$$ thus, $c\cdot n^{1.5}\mu_n(B^n)\rightarrow 0$ as $n\rightarrow \infty$. In conclusion, $$\int_{Q_n}\tau d\mu_n=\int_{G^n}\tau d\mu_n+\int_{B^n}\tau d\mu_n\leq (2+\epsilon^\prime)n+c\cdot n^{1.5}\mu_n(B^n)\leq (2+\epsilon)n,$$ when $n$ is large enough since $\epsilon^\prime<\epsilon$.
\end{proof}
Similarly we get lower bounds:
\begin{prop}\label{lower}
	Suppose that $\epsilon>0$. Then there exists $n_0\in\N$ such that $\int_{Q_n}\tau d\mu_n\geq 2(1-\epsilon)n$ for all $n\geq n_0$. 
\end{prop}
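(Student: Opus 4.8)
The plan is to mirror the argument of Proposition \ref{upper}, but now to arrange that the degrees of the iterates $S^{j_i}(r)$ stay \emph{above} a slowly decreasing deterministic envelope, rather than below a quickly decreasing one. For $r\in Q_n$ write $d_r(k)=\deg(S^k(r))=n+\sum_{i=0}^{k-1}p(r)_i-k$; by Lemma \ref{iterationS} this is non-increasing with increments in $\{0,-1\}$, satisfies $d_r(0)=n$, and $\tau(r)=\min\{k:d_r(k)=0\}$. In particular $\tau(r)\ge k+d_r(k)$ whenever $k\le\tau(r)$, so a good lower bound on $d_r$ along a long initial segment of the trajectory forces $\tau(r)$ to be large. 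The goal is therefore to show that, off a set of vanishing measure, $d_r(j_i)\ge k_i$ holds for a deterministic envelope $k_i\approx n(\tfrac12-\delta)^i$.

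First I fix $0<\delta<\tfrac12$ and set $k_0=n$, $k_{i+1}=\lfloor(\tfrac12-\delta)k_i\rfloor$, and $j_i=\sum_{l<i}k_l$ (so $j_0=0$, $j_1=n$, $j_{i+1}-j_i=k_i$), and I let $i_0$ be minimal with $k_{i_0}<\sqrt n$. As in Proposition \ref{upper} I introduce the reference elements $s_i=\frac{x^{j_{i+1}-n}}{(x+1)^{j_i}}\,r$, which by Lemma \ref{InvarianceMultiplication} lie in $Q_{k_i}$ and have law $\mu_{k_i}$; write $\Sigma_i=\sum_{l=0}^{k_i-1}p(s_i)_l$. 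With $A'_{k}=\{s\in Q_k:\sum_{l<k}p(s)_l\le k(\tfrac12-\delta)\}$, the bad event $\{\Sigma_i<k_i(\tfrac12-\delta)\}$ is contained in $\{s_i\in A'_{k_i}\}$, so Lemma \ref{InvarianceMultiplication} together with the lower tail of Lemma \ref{Hoeffding} gives $\mu_n(\{\Sigma_i<k_i(\tfrac12-\delta)\})\le\mu_{k_i}(A'_{k_i})\le e^{-2\delta^2k_i}$. I set $G=\bigcap_{i=0}^{i_0-1}\{\Sigma_i\ge k_i(\tfrac12-\delta)\}$.

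The heart of the argument, and the step I expect to be the main obstacle, is to propagate the invariant $d_r(j_i)\ge k_i$ on $G$. The difficulty is that the $\preccurlyeq_x$ domination exploited in Proposition \ref{upper} points toward \emph{smaller} degrees, the wrong direction for a lower bound. The resolution is that the invariant itself reverses the comparison: by Lemma \ref{iterationS} one computes $S^{j_i}(r)=x^{\,d_r(j_i)-k_i}\,s_i$, so as soon as $d_r(j_i)\ge k_i$ we have $s_i\preccurlyeq_x S^{j_i}(r)$. Applying $S^{k_i}$ and invoking Lemma \ref{shift} yields $S^{k_i}(s_i)\preccurlyeq_x S^{k_i}(S^{j_i}(r))=S^{j_{i+1}}(r)$, whence $d_r(j_{i+1})\ge\deg(S^{k_i}(s_i))=\Sigma_i$. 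On $G$ this gives $d_r(j_{i+1})\ge k_i(\tfrac12-\delta)\ge k_{i+1}$, closing the induction; the base case $d_r(j_0)=n=k_0$ is trivial, with $s_0=r$. Thus on $G$ the invariant holds for all $i\le i_0$.

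It remains to collect the estimates. Since $k_i\ge\sqrt n$ for $i\le i_0-1$ and $i_0=O(\log n)$, one gets $\mu_n(G^c)\le\sum_{i<i_0}e^{-2\delta^2k_i}=O(\log n)\,e^{-2\delta^2\sqrt n}\to0$. On $G$ the invariant gives $d_r(j_{i_0-1})\ge k_{i_0-1}\ge\sqrt n>0$, hence $\tau(r)\ge j_{i_0-1}$; and a routine geometric-series estimate, in which the $O(\log n)$ floor corrections contribute only $o(n)$ and $(\tfrac12-\delta)^{i_0-1}\to0$ because $i_0\to\infty$, shows $j_{i_0-1}=\sum_{l<i_0-1}k_l\ge\frac{2n}{1+2\delta}\,(1-o(1))$. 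Therefore $\int_{Q_n}\tau\,d\mu_n\ge j_{i_0-1}\,\mu_n(G)\ge\frac{2n}{1+2\delta}(1-o(1))$, and choosing $\delta$ small enough that $\frac{2}{1+2\delta}>2(1-\tfrac{\epsilon}{2})$ and then $n$ large enough gives $\int_{Q_n}\tau\,d\mu_n\ge2(1-\epsilon)n$, as required.
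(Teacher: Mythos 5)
Your proof is correct and takes essentially the same route as the paper: your invariant $d_r(j_i)\ge k_i$ is, via the identity $S^{j_i}(r)=x^{d_r(j_i)-k_i}s_i$, precisely the paper's inductive statement $S^{j_i}(r)\succcurlyeq_x\frac{x^{j_{i+1}-n}}{(x+1)^{j_i}}r$, propagated with the same ingredients (Lemmas \ref{shift}, \ref{iterationS}, \ref{Hoeffding}, \ref{InvarianceMultiplication}) along the same schedule $(k_i,j_i)$. The only deviations are cosmetic and harmless: you use floors instead of ceilings and an $n$-dependent stopping index $i_0$ (minimal with $k_{i_0}<\sqrt n$, giving a bad-set bound of order $(\log n)\,e^{-2\delta^2\sqrt n}$), whereas the paper fixes $i_0$ as a constant depending on $\delta,\epsilon$ so that $(\tfrac12-\delta)^{i_0}<\epsilon'$ and gets an exponentially-in-$n$ small bad set.
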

\begin{proof}
	Choose $\delta,\epsilon^\prime,\epsilon^{\prime\prime}$ such that $0<\delta<\frac{1}{2}$, $2\epsilon^\prime<\epsilon^{\prime\prime}<\epsilon$ and choose $i_0$ such that $\left(\frac{1}{2}-\delta\right)^{i_0}<\epsilon^\prime$. Fix $n_0\in\N$ such that $n_0\left(\frac{1}{2}-\delta\right)^{i_0}>1$. Let $n\geq n_0$. For $k\leq n$ define $$A_k=\left\{r\in Q_k\mid \sum_{i=0}^{k-1}p(r)_i\leq k\left(\frac{1}{2}-\delta\right)\right\},$$ then $\mu_k(A_k)\leq e^{-2\delta^2k}$ by Lemma \ref{Hoeffding}.
	Recursively define $(k_0,j_0)=(n,0)$ and $$(k_{i+1},j_{i+1})=\left(\left\lceil\left(\frac{1}{2}-\delta\right)k_i\right\rceil,k_i+j_i\right).$$
		First, we show that $k_i\geq n\left(\frac{1}{2}-\delta\right)^i$ for $0\leq i\leq i_0$ -- just note that $k_0=n$ and inductively $$k_{i+1}=\left\lceil\left(\frac{1}{2}-\delta\right)k_i\right\rceil\geq \left(\frac{1}{2}-\delta\right)k_i\geq n\left(\frac{1}{2}-\delta\right)^{i+1}.$$
	  For $M\in\N$ and $M>1$ we have $\left\lceil\left(\frac{1}{2}-\delta\right)M\right\rceil<M$. As $$k_{i_0}\geq n\left(\frac{1}{2}-\delta\right)^{i_0}\geq n_0\left(\frac{1}{2}-\delta\right)^{i_0}>1$$ we obtain that $1<k_{i+1}<k_i$ for $0\leq i< i_0-1$.  \\ 
	Put $$B_i=\frac{(x+1)^{j_i}}{x^{j_{i+1}-n}}A_{k_i}\subseteq Q_n.$$ By Lemma \ref{InvarianceMultiplication} we have $$\mu_{n}(B_i)=\mu_{k_i}(A_{k_i})\leq e^{-2\delta^2k_i}.$$ Put $B^n=\bigcup_{i=0}^{i_0}B_i$ and put $G^n=Q_n\setminus B^n$. If $r\in G^n$ we show recursively that
	$$S^{j_{i}}(r)\succeq_x\frac{x^{j_{i+1}-n}}{(x+1)^{j_{i}}}r$$ for $i\leq i_0$. 
	This is certainly true for $i=0$. Suppose we have 
		$$S^{j_{i}}(r)\succeq_x\frac{x^{j_{i+1}-n}}{(x+1)^{j_{i}}}r.$$ As $$\frac{x^{j_{i+1}-n}}{(x+1)^{j_{i}}}r\in \frac{x^{j_{i+1}-n}}{(x+1)^{j_{i}}}G^n\subseteq Q_{k_i}\setminus A_{k_i},$$ we obtain by hypothesis, Lemma \ref{shift}, Lemma \ref{iterationS}, and the definition of $A_{k_i}$
	\begin{align*}
S^{j_{i+1}}(r)=S^{j_i+k_i}(r)&=S^{k_i}(S^{j_i}(r))\succeq_x S^{k_i}\left(\frac{x^{j_{i+1}-n}}{(x+1)^{j_{i}}}r\right)\\=\frac{x^{\sum_{l=0}^{k_i-1}p\left(\frac{x^{j_{i+1}-n}}{(x+1)^{j_{i}}}r\right)_l}}{(x+1)^{j_i+k_i}}x^{j_{i+1}-n}r&\succeq_x \frac{x^{\left\lceil\left(\frac{1}{2}-\delta\right)k_i\right\rceil +j_{i+1}-n}}{(x+1)^{j_i+k_i}}r=\frac{x^{j_{i+2}-n}}{(x+1)^{j_{i+1}}}r.	\end{align*}
	In particular, $$S^{j_{i_0}}(r)\succeq_x\frac{x^{j_{i_0+1}-n}}{(x+1)^{j_{i_0}}}r.$$ 

	Now, $$\deg\left(\frac{x^{j_{i_0+1}-n}}{(x+1)^{j_{i_0}}}r\right)=j_{i_0+1}-n+n-j_{i_0}=k_{i_0}\geq n\left(\frac{1}{2}-\delta\right)^{i_0}.$$ As $n\left(\frac{1}{2}-\delta\right)^{i_0}>0$, we obtain $\tau(r)\geq j_{i_0}$ and $$j_{i_0}=\sum_{i=0}^{i_0-1}k_i\geq n\sum_{i=0}^{i_0-1} \left(\frac{1}{2}-\delta\right)^i= n\frac{1-(\frac{1}{2}-\delta)^{i_0}}{\frac{1}{2}+\delta},$$ thus, $$\tau(r)\geq n\frac{1-(\frac{1}{2}-\delta)^{i_0}}{\frac{1}{2}+\delta}\geq n\frac{1-\epsilon^\prime}{\frac{1}{2}+\delta}\geq n(2-\epsilon^{\prime\prime})$$ when $\delta>0$ is small enough. We are almost done. We just need to estimate $\mu_n(B^n)$. As $$k_{i_0}\geq n\left(\frac{1}{2}-\delta\right)^{i_0}$$ we obtain $$ e^{-2\delta^2k_{i_0}}\leq e^{-2\delta^2\left(\frac{1}{2}-\delta\right)^{i_0}n}.$$ Thus, $$\mu(B^n)\leq \sum_{k=k_{i_0}}^{\infty}e^{-2\delta^2k}\leq e^{-2\delta^2\left(\frac{1}{2}-\delta\right)^{i_0}n}\sum_{k=0}^{\infty}e^{-2\delta^2k}=\frac{e^{-2\delta^2\left(\frac{1}{2}-\delta\right)^{i_0} n}}{1-e^{-2\delta^2}}.$$ In conclusion, $$\int_{Q_n}\tau d\mu_n\geq\int_{G^n}\tau d\mu_n\geq (2-\epsilon^{\prime\prime})n\left(1-\frac{e^{-2\delta^2\left(\frac{1}{2}-\delta\right)^{i_0} n}}{1-e^{-2\delta^2}}\right)\geq (2-\epsilon)n,$$ when $n$ is large enough since $\epsilon^{\prime\prime}<\epsilon$ and $\frac{e^{-2\delta^2(\frac{1}{2}-\delta)^{i_0} n}}{1-e^{-2\delta^2}}\rightarrow 0$ as $n\rightarrow\infty$.
\end{proof}
We now immediately get our main result:
\begin{thm}
It holds that $\lim_{n\rightarrow\infty}\frac{\rho(n)}{n}=2$ and $\lim_{n\rightarrow\infty}\frac{\rho_0(n)}{n}=3$.
\end{thm}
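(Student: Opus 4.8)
The plan is to obtain the first limit as an immediate squeeze between the two preceding propositions, and then to deduce the second limit by the purely algebraic relation between $\tau_0$ and $\tau_1$ recorded in the introduction.

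First I would recall that $\rho(n)=\int_{Q_n}\tau\,d\mu_n$, as was established just after Definition \ref{defmeasure}. Fixing an arbitrary $\epsilon>0$, Proposition \ref{upper} supplies some $n_1$ with $\int_{Q_n}\tau\,d\mu_n\leq 2(1+\epsilon)n$ for $n\geq n_1$, and Proposition \ref{lower} supplies some $n_2$ with $\int_{Q_n}\tau\,d\mu_n\geq 2(1-\epsilon)n$ for $n\geq n_2$. Setting $n_0=\max(n_1,n_2)$ and dividing through by $n$ gives
$$2(1-\epsilon)\leq \frac{\rho(n)}{n}\leq 2(1+\epsilon)\quad\text{for all }n\geq n_0.$$
Since $\epsilon>0$ was arbitrary, this yields $\lim_{n\to\infty}\frac{\rho(n)}{n}=2$, which is exactly Theorem \ref{main}.

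For the second limit I would translate everything back to $\rho_0$. Recall from the introduction that $\tau_0(f)=2\tau_1(f)-\deg(f)$ for every nonzero $f$, and that $\rho_1(n)=\rho(n)$ because $T_1$ and $T$ are conjugate via the degree-preserving involution $\sigma$. Summing the identity $\tau_0(f)=2\tau_1(f)-\deg(f)$ over $f\in P_n$ and using that $\deg(f)=n$ throughout $P_n$, I get
$$\rho_0(n)=\frac{\sum_{f\in P_n}\tau_0(f)}{2^n}=\frac{2\sum_{f\in P_n}\tau_1(f)-n\,2^n}{2^n}=2\rho_1(n)-n=2\rho(n)-n.$$
Dividing by $n$ gives $\frac{\rho_0(n)}{n}=2\frac{\rho(n)}{n}-1$, so taking the limit and inserting the value just computed yields $\lim_{n\to\infty}\frac{\rho_0(n)}{n}=2\cdot 2-1=3$.

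There is essentially no obstacle remaining at this stage: all of the analytic difficulty has already been absorbed into Propositions \ref{upper} and \ref{lower}, whose proofs drive the stopping time down along the geometrically shrinking scales $k_i$ while controlling the exceptional sets $B^n$ of exponentially small measure. The only point that warrants a moment of care is the bookkeeping for $\rho_0$, namely that one must invoke both $\rho_1=\rho$ and the fact that the degree is constantly $n$ on $P_n$ so that $\sum_{f\in P_n}\deg(f)=n\,2^n$; once these are in place the two limits follow in a single line each.
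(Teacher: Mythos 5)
Your proposal is correct and follows essentially the same route as the paper: the first limit is the immediate squeeze from Propositions \ref{upper} and \ref{lower} together with $\rho(n)=\int_{Q_n}\tau\,d\mu_n$, and the second follows from the identity $\tau_0(f)=2\tau_1(f)-\deg(f)$ (which the paper re-derives in its proof by counting the $\deg(f)$ degree-decreasing steps, while you simply cite it from the introduction) summed over $P_n$ to give $\rho_0(n)=2\rho(n)-n$. Your bookkeeping, including $\sum_{f\in P_n}\deg(f)=n\,2^n$ and the use of $\rho_1=\rho$, is accurate, so nothing is missing.
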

\begin{proof}
	The first part follows from Propositions \ref{upper} and \ref{lower} and the fact that $\rho(n)=\int_{Q_n}\tau d\mu_n$. The second part follows from the fact that for $f$ to reach $1$ under $T_1$ one needs exactly $\deg(f)$-many steps where the degree decreases, thus, $\tau_0(f)=\deg(f)+2(\tau_1(f)-\deg(f))=2\tau_1(f)-\deg(f)$, thus, $\rho_0(n)=2\rho_1(n)-n=2\rho(n)-n$, hence $\frac{\rho_0(n)}{n}=2\frac{\rho(n)}{n}-1\rightarrow 3$ as $n\rightarrow\infty$.
\end{proof}
This answers Conjecture~4.2 of ~\cite{alon}.
As an immediate corollary from the proofs of  Propositions \ref{upper} and \ref{lower} we note that most $f\in P_n$ reach $1$ in roughly $2n$-many steps:
\begin{thm}
	For every $\epsilon>0$ there exist $C_\epsilon,D_\epsilon>0$ such that for all $n\in\N$ $$\nu_n\left(\left\{f\in P_n\mid (2-\epsilon)n\leq\tau(f)\leq(2+\epsilon)n\right\}\right)\geq 1-C_\epsilon e^{-D_\epsilon\sqrt{n}}.$$
\end{thm}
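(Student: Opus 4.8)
The plan is to revisit the proofs of Propositions~\ref{upper} and~\ref{lower} and read off, not merely the bound on the average of $\tau$, but a bound on the measure of the set where $\tau$ deviates from $2n$. Those two proofs in fact already localize all deviant behaviour to ``bad sets'' $B^n$ of exponentially small $\mu_n$-measure, so almost all the work is done; what remains is to record the tail estimates and transport them to $(P_n,\nu_n)$.

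First I would record the transfer from $\mu_n$ to $\nu_n$. Since $\tau(r)=\tau([r])$ by Lemma~\ref{compatible} and the push-forward of $\mu_n$ under $r\mapsto[r]$ is $\nu_n$, for any real interval $I$ one has $\nu_n(\{f\in P_n\mid \tau(f)\in I\})=\mu_n(\{r\in Q_n\mid\tau(r)\in I\})$. Hence it suffices to bound $\mu_n(\{r\mid \tau(r)>(2+\epsilon)n\})$ and $\mu_n(\{r\mid \tau(r)<(2-\epsilon)n\})$ from above and add them via a union bound.

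Next I would extract the two tail bounds. In the proof of Proposition~\ref{upper}, run with a parameter $\epsilon'<\epsilon$: there every $r\in G^n=Q_n\setminus B^n$ satisfies $\tau(r)\le(2+\epsilon')n$ for $n$ large, so $\{r\mid\tau(r)>(2+\epsilon)n\}\subseteq B^n$, and the displayed estimate in that proof gives
\[
\mu_n(B^n)\le e^{-2\delta^2\lfloor\sqrt n(\tfrac12+\delta)-1\rfloor}\sum_{k=0}^{\infty}e^{-2\delta^2k}\le C'_\epsilon\,e^{-D'_\epsilon\sqrt n}.
\]
Symmetrically, in the proof of Proposition~\ref{lower} with a parameter $\epsilon''<\epsilon$, every $r\in G^n$ satisfies $\tau(r)\ge(2-\epsilon'')n$, so $\{r\mid\tau(r)<(2-\epsilon)n\}$ is contained in the corresponding bad set, whose measure is bounded there by $\frac{e^{-2\delta^2(\frac12-\delta)^{i_0}n}}{1-e^{-2\delta^2}}\le C''_\epsilon\,e^{-D''_\epsilon n}$; since $n\ge\sqrt n$ this is in particular at most $C''_\epsilon e^{-D''_\epsilon\sqrt n}$. (The lower tail even decays exponentially in $n$; the weaker $\sqrt n$ rate in the statement is forced only by the upper tail.) Combining the two bounds via the transfer principle yields
\[
\nu_n\!\left(\{f\in P_n\mid (2-\epsilon)n\le\tau(f)\le(2+\epsilon)n\}\right)\ge 1-C_\epsilon\,e^{-D_\epsilon\sqrt n}
\]
with $C_\epsilon=C'_\epsilon+C''_\epsilon$ and $D_\epsilon=\min(D'_\epsilon,D''_\epsilon)$.

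The one point requiring care — and the only genuine obstacle — is uniformity over all $n\in\N$, because the pointwise bounds $\tau\le(2+\epsilon')n$ and $\tau\ge(2-\epsilon'')n$ on $G^n$ are established in the two propositions only for $n$ past some threshold $n_0$. For the finitely many $n<n_0$ no estimate is needed: after fixing $D_\epsilon$, one simply enlarges $C_\epsilon$ so that $1-C_\epsilon e^{-D_\epsilon\sqrt n}\le 0$ throughout that initial range, rendering the asserted inequality vacuously true there. With this bookkeeping of constants the proof is complete, its substantive content being entirely inherited from Propositions~\ref{upper} and~\ref{lower}.
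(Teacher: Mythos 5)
Your proposal is correct and is essentially the paper's own argument: the paper gives no separate proof of this theorem, stating it as an immediate corollary of the proofs of Propositions \ref{upper} and \ref{lower}, which is exactly what you carry out by extracting the tail bounds from the bad sets $B^n$ and transferring them from $\mu_n$ to $\nu_n$. Your explicit handling of the finitely many $n$ below the threshold $n_0$ (enlarging $C_\epsilon$ so the bound is vacuous there) is the right bookkeeping for the ``for all $n\in\N$'' clause, which the paper leaves implicit.
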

And as direct consequence of this we note:
\begin{thm}
	The set of $r\in Q_0$ for which $\lim_{n\rightarrow\infty}\frac{\tau(x^nr)}{n}=2$ has $\mu_0$-measure $1$.
\end{thm}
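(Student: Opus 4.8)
The plan is to derive this from the preceding theorem by a Borel--Cantelli argument, the crucial input being that pushing $\mu_0$ forward along the multiplication map $r\mapsto x^n r$ reproduces $\mu_n$.

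First I would fix $\epsilon>0$ and introduce the good sets
$$E_n^\epsilon=\left\{r\in Q_0\mid (2-\epsilon)n\leq\tau(x^n r)\leq(2+\epsilon)n\right\}\subseteq Q_0.$$
Since $\deg(x^n r)=n$ for $r\in Q_0$, the map $r\mapsto x^n r$ sends $Q_0$ into $Q_n$, and by Lemma \ref{InvarianceMultiplication} (with $k=n$, $l=0$) the push-forward of $\mu_0$ under it is $\mu_n$. Combining this with $\tau(x^n r)=\tau([x^n r])$ from Lemma \ref{compatible} and the fact (noted after Definition \ref{defmeasure}) that the push-forward of $\mu_n$ under $s\mapsto[s]$ is $\nu_n$, one composes the two push-forwards to obtain
$$\mu_0(E_n^\epsilon)=\nu_n\left(\left\{f\in P_n\mid (2-\epsilon)n\leq\tau(f)\leq(2+\epsilon)n\right\}\right)\geq 1-C_\epsilon e^{-D_\epsilon\sqrt{n}},$$
the last inequality being exactly the preceding theorem. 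Hence $\mu_0\bigl((E_n^\epsilon)^c\bigr)\leq C_\epsilon e^{-D_\epsilon\sqrt{n}}$.

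Next I would note that $\sum_{n\geq 1}C_\epsilon e^{-D_\epsilon\sqrt{n}}<\infty$ --- this is precisely where the square-root decay furnished by the preceding theorem is needed --- so the Borel--Cantelli lemma gives $\mu_0\bigl(\limsup_n (E_n^\epsilon)^c\bigr)=0$. Equivalently, for $\mu_0$-almost every $r$ one has $r\in E_n^\epsilon$ for all but finitely many $n$, i.e.\ $\limsup_{n\to\infty}\left|\frac{\tau(x^n r)}{n}-2\right|\leq\epsilon$. Writing $F^\epsilon$ for the set of such $r$, we have $\mu_0(F^\epsilon)=1$.

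Finally I would intersect over a sequence $\epsilon\to 0$: the set $F=\bigcap_{m\geq 1}F^{1/m}$ is a countable intersection of sets of full $\mu_0$-measure, hence $\mu_0(F)=1$, and every $r\in F$ satisfies $\limsup_n\left|\frac{\tau(x^n r)}{n}-2\right|\leq \frac{1}{m}$ for all $m$, forcing $\lim_{n\to\infty}\frac{\tau(x^n r)}{n}=2$. The argument is essentially routine once the measure transfer is set up; the only steps requiring care are the correct composition of the two push-forwards that yields the identity $\mu_0(E_n^\epsilon)=\nu_n(\cdots)$, and the summability check underlying Borel--Cantelli, which would fail for a merely polynomial error term and genuinely uses the exponential-in-$\sqrt{n}$ bound.
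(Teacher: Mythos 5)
Your proposal is correct and follows essentially the same route as the paper: transfer the preceding theorem's bound from $\nu_n$ to $\mu_0$ via Lemma \ref{InvarianceMultiplication} (together with $\tau(r)=\tau([r])$), use summability of $C_\epsilon e^{-D_\epsilon\sqrt{n}}$, and intersect full-measure sets over $\epsilon=1/m$. The only cosmetic difference is that you invoke Borel--Cantelli by name, whereas the paper carries out the same tail union bound explicitly via the sets $A^\epsilon_N$; your write-up is, if anything, slightly more explicit about composing the two push-forwards.
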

\begin{proof}
Let $\epsilon>0$.	By the previous theorem and Lemma \ref{InvarianceMultiplication} we know that
there exist $C_\epsilon,D_\epsilon>0$ such that for all $n\in\N$ $$\mu_0\left(\left\{r\in Q_0\mid (2-\epsilon)n\leq\tau(x^nr)\leq(2+\epsilon)n\right\}\right)\geq 1-C_\epsilon e^{-D_\epsilon\sqrt{n}}.$$  Now, we have that $\sum_{n=0}^\infty C_\epsilon e^{-D_\epsilon\sqrt{n}}<\infty$. Put $A^\epsilon_N=\{r\in Q_0\mid \exists n\geq N: |\frac{\tau(x^nr)}{n}-2n|>\epsilon\}$. Then we conclude $\mu_0(A^\epsilon_N)\leq\sum_{n=N}^\infty C_\epsilon e^{-D_\epsilon\sqrt{n}}$. Put $$F_\epsilon=\left\{r\in Q_0\mid\exists N\in\N\forall n\geq N: |\frac{\tau(x^nr)}{n}-2|\leq\epsilon\right\}$$. Then $Q_0\setminus A^\epsilon_N\subseteq F_\epsilon$ for all $N\in\N$, thus, $\mu_0(F_\epsilon)=1$. We conclude that $$F=\bigcap_{n\in\N}F_{\frac{1}{n+1}}$$ also has full $\mu_0$-measure and if $r\in F$ then $\lim_{n\rightarrow\infty}\frac{\tau(x^nr)}{n}=2$.
\end{proof}
\section{An upper bound for the stopping time}
In this section we give an alternative proof of Theorem~1.1 in ~\cite{alon}. We begin with the following simple but crucial lemma.
\begin{lem}\label{periodicity}
	Suppose that $n\in\N$ and $r\in\FormalLaurent^+$ and $\deg(r)< 2^n$. Then $[r]=[P^{2^n}(r)]$.
\end{lem}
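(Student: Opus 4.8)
The plan is to compute the operator $P^{2^n}$ in closed form and then read off coefficients directly. Since $P$ is multiplication by $\frac{x}{x+1}=\sum_{k=0}^\infty x^{-k}\in\FormalLaurent$, the map $P^{2^n}$ is multiplication by $\left(\frac{x}{x+1}\right)^{2^n}$. The key simplification is that we work in characteristic $2$, where the Frobenius map $t\mapsto t^2$ is a ring endomorphism of $\FormalLaurent$ satisfying $\left(\sum_z a_z x^z\right)^2=\sum_z a_z x^{2z}$ (all cross terms vanish and $a_z^2=a_z$). Applying it $n$ times to $\frac{x}{x+1}=\sum_{k\geq 0}x^{-k}$ gives $\left(\frac{x}{x+1}\right)^{2^n}=\sum_{k=0}^\infty x^{-2^n k}$; equivalently, $(x+1)^{2^n}=x^{2^n}+1$, so that $P^{2^n}$ is multiplication by $\frac{x^{2^n}}{x^{2^n}+1}=\frac{1}{1+x^{-2^n}}=\sum_{k\geq 0}x^{-2^n k}$.

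With this in hand I would write $r=\sum_{z\in\Z}a_z x^z$, recalling that $r\in\FormalLaurent^+$ and $a_z=0$ for $z>\deg(r)$. Multiplying out, $P^{2^n}(r)=\sum_{k=0}^\infty\sum_{z\in\Z}a_z x^{z-2^n k}$, so the coefficient of $x^m$ in $P^{2^n}(r)$ equals $\sum_{k=0}^\infty a_{m+2^n k}$. To identify $[P^{2^n}(r)]$ with $[r]$ it suffices to check that these coefficients agree for every $m\geq 0$.

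Here the hypothesis $\deg(r)<2^n$ does all the work, and it is the only place where it is used: for $m\geq 0$ and any $k\geq 1$ we have $m+2^n k\geq 2^n>\deg(r)$, hence $a_{m+2^n k}=0$, so only the term $k=0$ survives and the coefficient of $x^m$ in $P^{2^n}(r)$ is exactly $a_m$, which is the coefficient of $x^m$ in $[r]$. This yields $[P^{2^n}(r)]=[r]$. There is no real obstacle beyond setting up the closed form correctly; the conceptual content is simply that the series $\sum_k x^{-2^n k}$ has gaps of length $2^n$, so none of the lower-order tails $x^{-2^n k}r$ with $k\geq 1$ can contribute to a monomial of degree in $\{0,\dots,\deg(r)\}$ once $\deg(r)<2^n$. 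If one prefers to avoid coefficient bookkeeping, the same conclusion follows from part $4$ of Lemma \ref{gathering} (which reduces the claim to the case $r=[r]$) together with the observation that $x^{-2^n k}[r]$ has strictly negative degree for $k\geq 1$, so its polynomial part vanishes and only the $k=0$ term of $P^{2^n}([r])$ survives under $[\cdot]$.
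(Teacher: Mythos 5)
Your proof is correct and follows essentially the same route as the paper: both hinge on the characteristic-$2$ identity $(x+1)^{2^n}=1+x^{2^n}$, giving $P^{2^n}(r)=\sum_{k\geq 0}x^{-2^n k}r$, and then use $\deg(r)<2^n$ to discard every term with $k\geq 1$. The only cosmetic difference is that you verify this coefficient by coefficient, whereas the paper notes directly that the tail $\sum_{k\geq 1}x^{-2^n k}r$ has negative degree and hence vanishes under $[\cdot]$.
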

\begin{proof}
	Note that $(x+1)^{2^n}=1+x^{2^n}$ and $\frac{x^{2^n}}{1+x^{2^n}}=\sum_{k=0}^\infty x^{-k\cdot 2^n}$. Thus, $P^{2^n}(r)=\frac{x^{2^n}}{1+x^{2^n}}r=r+\sum_{k=1}^\infty x^{-k\cdot 2^n}r$ and since $\deg(\sum_{k=1}^\infty x^{-k\cdot 2^n}r)=\deg(r)-2^n< 0$, we get $\left[r+\sum_{k=1}^\infty x^{-k\cdot 2^n}r\right]=[r]$.
\end{proof}
\begin{lem}\label{one}
	Suppose that $r\in \FormalLaurent^+$. If $[r](0)=1$ or $[r](1)=1$, then $[S(r)](0)=1$.
\end{lem}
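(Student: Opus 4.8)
The plan is to read off the constant term $[S(r)](0)$ directly from the explicit series expansion of $S(r)$ and then split into cases according to the value of $[r](1)$. Write $r=\sum_{z\in\Z}a_zx^z$; since $r\in\FormalLaurent^+$ we have $\deg(r)\geq 0$ and $a_j=0$ for $j>\deg(r)$, so every sum of coefficients that appears below is a finite sum. First I would record the two identities $[r](0)=a_0$ and $[r](1)=\sum_{j\geq 0}a_j$ (the latter an equality in $\F_2$), both of which are immediate from $[r]=\sum_{z\geq 0}a_zx^z$.

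Next, by part $2$ of Lemma \ref{gathering} we have $S(r)=\sum_{z\in\Z}\bigl(\sum_{j=z}^\infty a_j\bigr)x^{z+[r](1)-1}$. The constant term of $[S(r)]$ is simply the coefficient of $x^0$ in $S(r)$, since passing to the polynomial part leaves the non-negative-exponent coefficients unchanged. The $x^0$ term corresponds to the unique index $z=1-[r](1)$, whence $[S(r)](0)=\sum_{j\geq 1-[r](1)}a_j$.

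Now I would case split on $[r](1)$. If $[r](1)=1$, then $[S(r)](0)=\sum_{j\geq 0}a_j=[r](1)=1$, exactly as claimed. If instead $[r](1)=0$, then the hypothesis forces $[r](0)=1$, and $[S(r)](0)=\sum_{j\geq 1}a_j=\bigl(\sum_{j\geq 0}a_j\bigr)+a_0=[r](1)+[r](0)=0+1=1$ in $\F_2$. In both cases $[S(r)](0)=1$, completing the argument.

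I do not expect a genuine obstacle here: once Lemma \ref{gathering} is invoked the whole content is a one-line coefficient extraction, and the only points needing care are bookkeeping — matching the exponent shift $[r](1)-1$ so as to isolate the $x^0$ coefficient, and remembering that subtraction equals addition in $\F_2$, so that $\sum_{j\geq 1}a_j$ and $\sum_{j\geq 0}a_j$ differ exactly by $a_0=[r](0)$. The finiteness of all the coefficient sums, guaranteed by $\deg(r)\geq 0$, is what keeps these manipulations legitimate.
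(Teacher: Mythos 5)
Your proof is correct and follows essentially the same route as the paper: both invoke part $2$ of Lemma \ref{gathering} for the explicit expansion of $S(r)$ and then case-split on $[r](1)$, obtaining $[S(r)](0)=[r](1)$ when $[r](1)=1$ and $[S(r)](0)=[r](1)+[r](0)$ when $[r](1)=0$. The only difference is cosmetic — you index coefficients ascendingly and extract the $x^0$ coefficient of $S(r)$ directly, while the paper indexes from the leading coefficient and reads off the constant term of the polynomial $[S(r)]$.
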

\begin{proof}
	Suppose that $\deg(r)=m$ and $[r]=\sum_{k=0}^ma_kx^{m-k}$. We have $S(r)=\frac{x^{p(r)_0}}{x+1}r$. 
		Suppose first that $[r](1)=1$, then $$[S(r)]=\left[\frac{x}{x+1}r\right]=\sum_{k=0}^{m}\left(\sum_{i=0}^{k}a_i\right)x^{m-k}$$ by part $2.$ of Lemma \ref{gathering}. Thus, $[S(r)](0)=\sum_{i=0}^ma_i=[r](1)=1$. It remains to consider the case in which $[r](0)=1$ and $[r](1)=0$. In this case $$[S(r)]=\left[\frac{1}{x+1}r\right]=\sum_{k=0}^{m-1}\left(\sum_{i=0}^{k}a_i\right)x^{m-1-k}$$ by part $2.$ of Lemma \ref{gathering}. Thus, $$[S(r)](0)=\sum_{i=0}^{m-1}a_i=\left(\sum_{i=0}^{m}a_i\right)+a_m=[r](1)+1=1$$ since $a_m=[r](0)=1$.
\end{proof}

We say that $\sum_{k=0}^{M}a_kx^{M-k}$ is an \textit{initial segment} of $\sum_{k=0}^{N}b_kx^{N-k}$ if $M\leq N$ and $a_k=b_k$ for all $0\leq k\leq M$. 
Suppose that $f\in\F_2[x]$ and let $n\in\N$ be such that $2^{n-1}\leq \deg(f)< 2^n$. For simplicity we assume $f(0)=1$. One main ingredient of the proof is that $T^{k+m\cdot 2^n}(f)$ is an initial segment of $[P^k(f)]$ for any $m\in\N$. In order to visualize the entire orbit of $f$ under $T$ consider the $[0\cdots 2^n-1]\times [0\cdots\deg(f)]$-matrix $A^f$, whose entry at $(k,i)$ is the coefficient of $x^{\deg(f)-i}$ of $[P^k(f)]$. Now, mark the index of the constant coefficient of each $T^k(f)$ (which is equal to $1$ by Lemma \ref{one} and the assumption that $f(0)=1$). If the degree of $T^k(f)$ does not change for $m$ many steps, say from $k_1$ to $k_1+m-1$, this implies that there is a vertical line consisting of $m$ marked entries in column $\deg(T^{k_1}(f))$. Now, there is a triangle of height $m-1$ and cardinality $\frac{m(m-1)}{2}$ to the left of this line of length $m$. In this triangle all entries are $0$ since $a_{i,j}=a_{i,j+1}+a_{i-1,j+1} \pmod 2$ by part $5.$ of Lemma \ref{gathering}. If we add the vertical line of length $m$ to the triangle we get a triangle with $\frac{m(m+1)}{2}$ many entries. For each column with index $1,\cdots,\deg(f)$ we get such a triangle, say of length $m_k$. Note that all these triangles are pairwise disjoint. Now, $\tau(f)$ is the sum of the lengths of the $\deg(f)$ many triangles (i.e., $\sum_{k=1}^{\deg(f)}m_k$). Thus, by the Cauchy-Schwarz inequality we obtain $$(\tau(f))^2=\left(\sum_{k=1}^{\deg(f)}m_k\right)^2\leq \deg(f)\sum_{k=1}^{\deg(f)} m_k^2\leq 2 \deg(f)\sum_{k=1}^{\deg(f)} \frac{1}{2}m_k(m_k+1)\leq 2 \deg(f)\cdot \deg(f)2^n,$$ thus, $\tau(f)\leq \deg(f)\sqrt{2\cdot 2^n}\leq 2\deg(f)^{1.5}$. As an illustration, see Figures \ref{figure 1} and \ref{figure 2}, where the above procedure is shown for a polynomial $P_L(x)$, which has a rather large stopping time and a polynomial $P_{R}$ chosen at random which has a stopping time $\approx 2\deg(P_{R})$.
\begin{figure}[ht]
	\centering
	\includegraphics[width=0.6\textwidth]{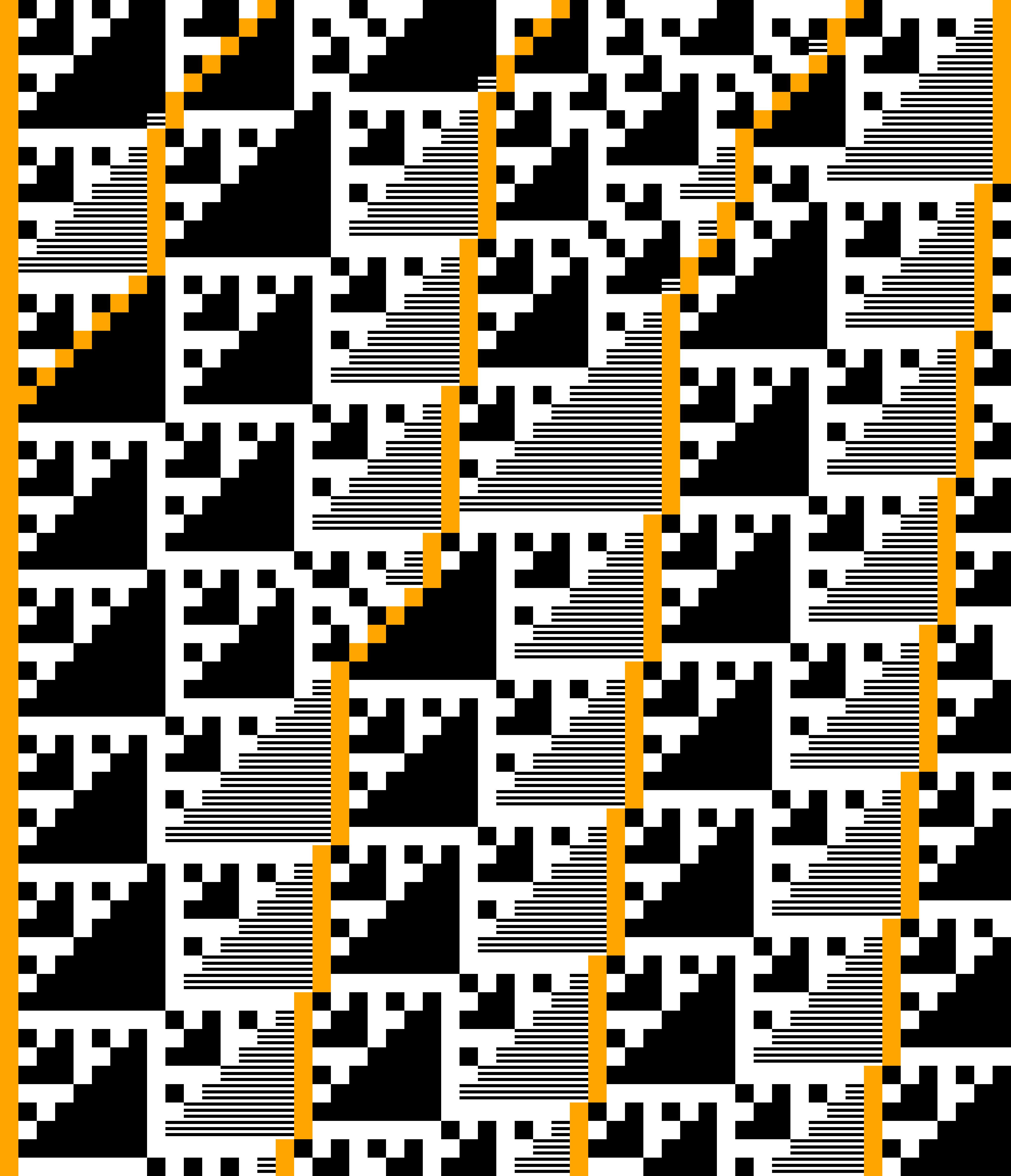}
	\captionsetup{width=0.6\textwidth}
	\caption{Illustration of the matrix $A^{P_L}$ for  $P_L(x)=x^{54} + x^{52} + x^{50} + x^{48} + x^{45} + x^{44} + x^{41} + x^{40} + x^{38} + x^{37} + x^{36} + x^{34} + x^{33} + x^{32} + x^{27} + x^{26} + x^{25} + x^{24} + x^{22} + x^{20} + x^{19} + x^{18} + x^{17} + x^{16} + x^{13} + x^{12} + x^{11} + x^{10} + x^{9} + x^{8} + x^{6} + x^{5} + x^{4} + x^{3} + x^{2} + x + 1$. The entries corresponding to the constant coefficients of $T^k(P_L)$ are colored orange, entries with zero belonging to a relevant triangle are striped black and white. All other cells with entry $0$ are colored black and all other cells with entry $1$ are colored white.}
	\label{figure 1}
\end{figure} In the following we present this argument formally. We begin with a few necessary definitions: 
\begin{defn}\label{visualmatrix} Suppose that $f\in\F_2[x]$ with $f(0)=1$.
	\begin{enumerate}
		\item For $k\in\N$ denote by $\overline{k}$ the unique $0\leq i<2^n$ such that $i\equiv k \pmod {2^n}$. 
	\item Define $A^f=(a_{i,j})_{0\leq i<2^n,0\leq j\leq \deg(f)}$ by $[P^i(f)]=\sum_{j=0}^{\deg (f)}a_{i,j}x^{\deg(f)-j}$ for $0\leq i<2^n$.

	  \item Let $I_f=\{(\overline{k},\deg(T^k(f)))\mid 0\leq k<\tau(f)\}$.
	 \item  Define $I_f^j=\{(i,j)\mid (i,j)\in I_f\}$ for $0< j\leq\deg(f)$.
	 \item For $0< j\leq \deg(f)$ define $i_j=\min\{k\in\N\mid\deg(T^k(f))=j\}$ and $h_j=\max\{k\in\N\mid\deg(T^k(f))=j\}$. 
	 \item Define $\Delta_j=\bigcup_{l=i_j}^{h_j}\{(\overline{l},j-d)\mid 0\leq d\leq l-i_j\}$ for $0< j\leq \deg(f)$. 
	 \item
	 For  $0< j\leq \deg(f)$ define $l(\Delta_j)=h_j-i_j+1$.
	\end{enumerate}
\end{defn} 
	Now, we gather the relevant results for the proof in a lemma.
	\begin{nofigures}
\begin{lem}\label{secondgathering}
	The following holds:
	\begin{enumerate}
		\item $T^k(f)=\sum_{j=0}^{\deg(T^k(f))}a_{\overline{k},j}x^{\deg(T^k(f))-j}$,
		\item $a_{\overline{k},\deg (T^{k}(f))}=1$ for every $k\geq 0$,
		\item for $(i,j)\in \intervalN{0}{2^n}\times\intervalC{0}{\deg(f)}$ and $m\leq \deg(f)-j$ we have $a_{i,j}=\sum_{l=0}^m\binom{m}{l}a_{\overline{i-l},j+m}$,
		\item for $(i,k)\in\Delta_j$ with $k>j$ we have $a_{i,k}=0$,
		\item for all $0\leq j<k\leq\deg(f)$ we have $\Delta_j\cap\Delta_k=\emptyset$,
		\item $\sum_{j=1}^{\deg(f)}l(\Delta_j)=\tau(f)$.
	\end{enumerate}
\end{lem}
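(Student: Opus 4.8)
The plan is to take the six items in order, leaning throughout on the \emph{row-periodicity} $[P^k(f)]=[P^{\overline k}(f)]$. This follows from Lemma~\ref{periodicity}: since $P$ preserves degree, $\deg(P^a f)=\deg(f)<2^n$ for every $a$, so $[P^{a+2^n}(f)]=[P^a(f)]$, and iterating gives $[P^k(f)]=[P^{\overline k}(f)]$. For item $1.$ I would combine Lemma~\ref{compatible} ($T^k(f)=[S^k(f)]$) with Lemma~\ref{iterationS} to write $S^k(f)=x^{\,e_k-k}P^k(f)$ where $e_k=\sum_{i=0}^{k-1}p(f)_i$; comparing degrees gives $k-e_k=\deg(f)-\deg(T^k f)$, so $S^k(f)$ is $P^k(f)$ shifted \emph{down} by $\deg(f)-\deg(T^k f)\ge 0$. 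Taking polynomial parts and substituting $[P^k f]=[P^{\overline k}f]=\sum_j a_{\overline k,j}x^{\deg(f)-j}$ leaves exactly the top $\deg(T^k f)+1$ coefficients, which is the claimed formula. Item $2.$ is then the assertion that the constant term of $T^k(f)$ equals $1$: by induction $T^0(f)(0)=f(0)=1$, and Lemma~\ref{one} (applied to $r=S^k(f)\in\FormalLaurent^+$, whose polynomial part has constant term $1$) propagates $[S^{k+1}(f)](0)=1$; by item $1.$ that constant term is $a_{\overline k,\deg(T^k f)}$.

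For item $3.$ I would first record the one-step recurrence. From $P^{i-1}(f)=(1+x^{-1})P^i(f)$ (Lemma~\ref{gathering}.5), comparing the coefficients of $x^{\deg(f)-j}$ at nonnegative powers and invoking row-periodicity to legalise the cyclic index, one obtains $a_{i,j}=a_{i,j+1}+a_{\overline{i-1},j+1}$ whenever $j+1\le\deg(f)$; this is item $3.$ for $m=1$. The general case is an induction on $m$: applying the one-step relation to each $a_{\overline{i-l},j+m}$ and regrouping by Pascal's identity $\binom{m}{l}+\binom{m}{l-1}=\binom{m+1}{l}$ over $\F_2$ upgrades the $m$-sum to the $(m+1)$-sum, and the side condition $m\le\deg(f)-j$ is precisely what keeps every index $j+m$ inside the matrix.

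Item $4.$ should read ``$(i,k)\in\Delta_j$ with $k<j$'': the printed $k>j$ is an evident slip, since by construction $\Delta_j$ lives in columns $\le j$. Reading off item $1.$ gives $a_{\overline l,j-d}=[\text{coefficient of }x^{d}\text{ in }T^l(f)]$, so the content is that on the degree-$j$ plateau ($i_j\le l\le h_j$) the polynomial $T^l(f)$ has its coefficients of $x^1,\dots,x^{l-i_j}$ equal to $0$, i.e.\ it accumulates one extra trailing zero per step after its constant term $1$. I would prove this by a double induction, on the plateau position $s=l-i_j$ and then on the column, fed by the one-step recurrence of item $3.$ and the marked values $a_{\overline l,j}=1$ from item $2.$: the first new entry is $a_{\overline l,j-1}=a_{\overline l,j}+a_{\overline{l-1},j}=1+1=0$, and every later entry is a sum of two previously-established zeros. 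A useful by-product is that the plateau cannot survive reaching $T^l(f)=x^j+1$ (which occurs at $s=j-1$ and forces $T^l(f)(1)=0$), so $l(\Delta_j)\le j$ and all column indices occurring in $\Delta_j$ are $\ge 1$.

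Items $5.$ and $6.$ finish the proof, and item $5.$ is the step I expect to be the genuine obstacle: the time intervals $[i_j,h_j]$ are disjoint only \emph{before} reduction mod $2^n$, so after folding two triangles could a priori collide. The clean resolution uses items $2.$ and $4.$ jointly. Suppose $(i,c)\in\Delta_j\cap\Delta_k$ with $j<k$ (take $\Delta_0=\emptyset$); witness $(i,c)\in\Delta_j$ by a time $l_j\equiv i$, so $c\le j$, and witness $(i,c)\in\Delta_k$ by a time $l_k\equiv i$ whose row-slice of $\Delta_k$ spans columns $[\,k-(l_k-i_k),\,k\,]$ with $k-(l_k-i_k)\le c\le j$. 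Then the cell $(i,j)$ lies in that same slice (since $k-(l_k-i_k)\le j<k$), hence it is an interior cell of $\Delta_k$ and $a_{i,j}=0$ by item $4.$; but $(i,j)=(\overline{l_j},\deg(T^{l_j}f))$ is a marked cell, so $a_{i,j}=1$ by item $2.$, a contradiction. Item $6.$ is then routine: from the explicit form of $T$ the degree either stays fixed or drops by exactly $1$, so $\deg(T^0 f),\dots,\deg(T^{\tau(f)}f)$ descend from $\deg(f)$ to $0$, meeting each value $j\ge 1$ on the contiguous block $[i_j,h_j]$ of length $l(\Delta_j)$; these blocks partition $\{0,\dots,\tau(f)-1\}$, whence $\sum_{j=1}^{\deg(f)}l(\Delta_j)=\tau(f)$.
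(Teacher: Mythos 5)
Your proposal is correct and follows the paper's own proof almost step for step: items 1--3 are exactly the paper's argument (Lemmas \ref{iterationS}, \ref{periodicity}, \ref{compatible} for item 1; Lemma \ref{one} for item 2; the one-step recurrence from part 5.\ of Lemma \ref{gathering} plus Pascal's identity for item 3), and your item 5 is precisely the paper's contradiction between a marked cell (item 2) and an interior cell of the larger triangle (item 4). You are also right to read item 4 with ``$k<j$'': by construction $\Delta_j$ only contains columns $\le j$, so the printed ``$k>j$'' is a slip, and the paper's own proof indeed treats $k<j$ (with the companion misprint $2^{k-j}$ for $2^{j-k}$). Your two local deviations are harmless and, if anything, slight improvements. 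For item 4 the paper applies item 3 once with $m=j-k$ and evaluates $\sum_{l=0}^{j-k}\binom{j-k}{l}=2^{j-k}\equiv 0\pmod 2$, while you regenerate the triangle of zeros by double induction from the $m=1$ recurrence; the same content, but the by-product you extract, $l(\Delta_j)\le j$, is worth stating explicitly, since it guarantees $\Delta_j$ stays inside the matrix (columns $\ge 1$) and has height $<2^n$. For item 6 the paper counts $\#I_f=\tau(f)$ and $\#I_f^j=h_j-i_j+1$, which tacitly requires that $k\mapsto(\overline{k},\deg(T^k(f)))$ be injective on each plateau, i.e.\ that no plateau is longer than $2^n$; your partition of $\{0,\dots,\tau(f)-1\}$ into the contiguous blocks $[i_j,h_j]$, combined with the fact that $l(\Delta_j)=h_j-i_j+1$ holds by definition, sidesteps that point entirely.
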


\begin{proof}
	$1.$: By Lemma \ref{iterationS} we know that $S^k(f)=\frac{x^{\sum_{i=0}^{k-1}p(f)_i}}{(x+1)^k}f=x^{-k+\sum_{i=0}^{k-1}p(f)_i}P^{k}(f)$. Thus, by Lemma \ref{periodicity} $[S^k(f)]=[x^{-k+\sum_{i=0}^{k-1}p(f)_i}P^{\overline{k}}(f)]$. By Lemma \ref{compatible} we know that $T^k(f)=[S^k(f)]=\left[x^{-k+\sum_{i=0}^{k-1}p(f)_i}P^{\overline{k}}(f)\right]$. As $\deg(P^k(f))=\deg(f)$, we obtain $\deg(T^k(f))=-k+\sum_{i=0}^{k-1}p(f)_i+\deg(f)$. Thus, by definition, $$T^k(f)=\left[x^{-k+\sum_{i=0}^{k-1}p(f)_i}P^{\overline{k}}(f)\right]=\left[x^{\deg(T^k(f))-\deg(f)}\sum_{j=0}^{\deg (f)}a_{i,j}x^{\deg(f)-j}\right]=\sum_{j=0}^{\deg(T^k(f))}a_{\overline{k},j}x^{\deg(T^k(f))-j}.$$\\
	$2.$ is immediate by Lemma \ref{one} and $1.$ since $a_{\overline{k},\deg (T^{k}(f))}=T^{k}(f)(0)=1$ for every $k\geq 0$ as $f(0)=1$ by assumption.
	We prove $3.$ by induction on $m$: The case $m=0$ is trivial. By part $5.$ of Lemma \ref{gathering} we have $a_{\overline{i-1},j+1}=a_{i,j}+a_{i,j+1}$, thus, $a_{i,j}=a_{i,j+1}+a_{\overline{i-1},j+1}$. By induction hypothesis we have $a_{i,j+1}=\sum_{l=0}^m\binom{m}{l}a_{\overline{i-l},j+1+m}$ and 
	$a_{\overline{i-1},j+1}=\sum_{l=0}^m\binom{m}{l}a_{\overline{i-1-l},j+1+m}$, hence, \begin{align*}		
&a_{i,j}=a_{i,j+1}+a_{\overline{i-1},j+1}=\sum_{l=0}^m\binom{m}{l}a_{\overline{i-l},j+1+m}+\sum_{l=0}^m\binom{m}{l}a_{\overline{i-1-l},j+1+m}\\&=\sum_{l=0}^{m+1}\left(\binom{m}{l}+\binom{m}{l-1}\right)a_{\overline{i-l},j+1+m}=\sum_{l=0}^{m+1}\binom{m+1}{l}a_{\overline{i-l},j+(m+1)}.	\end{align*}
	To see $4.$ notice that by definition of $\Delta_j$ we have $(\overline{i-l},j)\in I_j$ for $0\leq l\leq j-k$ and therefore $a_{i-l,j}=1$ for $0\leq l\leq j-k$ by $2.$ Now, by $3.$ we have $a_{i,k}=\sum_{l=0}^{j-k}\binom{j-k}{l}a_{\overline{i-l},j}=2^{k-j}=0 \pmod 2$.\\ 
	To see $5.$, suppose that $\Delta_k\cap\Delta_j\neq\emptyset$ and $k<j$. Then $a_{\overline{l}, k}=1$ for $i_k\leq l\leq h_k$, thus, for at least one index $(b,k)$ in the intersection we must have $a_{b,k}=1$, a contradiction to $4.$ for $\Delta_j$.\\
	To see $6.$ note that by definition $\#I_f=\tau(f)$ and $\#I_f=\sum_{j=1}^{\deg(f)}\#I^j_f$. But $\#I_f^j=h_j-i_j+1$ and the claim follows.
\end{proof}
\end{nofigures}
Now, we are ready to give the promised proof:
\begin{thm}\label{Upperbound}
For all $f\in F_2[x]\setminus\{0\}$ it holds that	$\tau(f)\leq 2\deg(f)^{1.5}+1$.
\end{thm}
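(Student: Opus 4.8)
The plan is to reduce the general statement to the special case handled by the machinery of Definition~\ref{visualmatrix} and Lemma~\ref{secondgathering}. First I would dispose of trivialities and normalizations. If $\deg(f)=0$ then $f=1$ and $\tau(f)=0$, so the bound holds. For $\deg(f)\geq 1$ I would reduce to the case $f(0)=1$: if $f(0)=0$, write $f=x^a g$ with $g(0)=1$ and $a\geq 1$; since $T$ divides by $x+1$, not $x$, this factorization does not obviously persist under iteration, so rather than manipulate $f$ directly I would instead invoke Lemma~\ref{one}. The point is that after at most one application of $S$ the constant coefficient becomes $1$ and stays $1$ thereafter, because Lemma~\ref{one} shows $[S(r)](0)=1$ whenever $[r](0)=1$ or $[r](1)=1$, and once $[r](0)=1$ the hypothesis is met at every subsequent step. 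So I would set $g=T(f)$ (or the first iterate at which the constant term is $1$), observe $\tau(f)\leq 1+\tau(g)$ and $\deg(g)\leq\deg(f)$, and thereby reduce to polynomials with constant term $1$; the additive ``$+1$'' in the theorem statement is exactly there to absorb this single step.

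With $f(0)=1$ fixed, I would choose $n$ minimal with $\deg(f)<2^n$, so $2^{n-1}\leq\deg(f)<2^n$ when $\deg(f)\geq 1$, and set up the matrix $A^f$ and the regions $\Delta_j$ as in Definition~\ref{visualmatrix}. The heart of the argument is the chain of inequalities sketched informally before the formal development, now justified by Lemma~\ref{secondgathering}. By part~$6$ of that lemma, $\tau(f)=\sum_{j=1}^{\deg(f)}l(\Delta_j)$. Each triangle $\Delta_j$ has cardinality $\#\Delta_j=\tfrac{1}{2}l(\Delta_j)(l(\Delta_j)+1)$ by its definition as a stack of rows of lengths $1,2,\dots,l(\Delta_j)$. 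By parts~$4$ and~$5$, the triangles are pairwise disjoint and each $\Delta_j$ lives inside the index range $\intervalN{0}{2^n}\times\intervalC{0}{\deg(f)}$, so $\sum_{j}\#\Delta_j\leq 2^n\cdot\deg(f)$. Combining these with Cauchy--Schwarz gives
\begin{align*}
	\tau(f)^2=\left(\sum_{j=1}^{\deg(f)}l(\Delta_j)\right)^2
	&\leq\deg(f)\sum_{j=1}^{\deg(f)}l(\Delta_j)^2
	\leq 2\deg(f)\sum_{j=1}^{\deg(f)}\tfrac{1}{2}l(\Delta_j)(l(\Delta_j)+1)\\
	&=2\deg(f)\sum_{j=1}^{\deg(f)}\#\Delta_j
	\leq 2\deg(f)\cdot 2^n\deg(f).
\end{align*}
Taking square roots and using $2^n\leq 2\deg(f)$ yields $\tau(f)\leq\deg(f)\sqrt{2\cdot 2^n}\leq 2\deg(f)^{1.5}$.

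The step I expect to be the main obstacle is the rigorous verification that the geometric picture of the ``triangles'' is correct, specifically that $\#\Delta_j=\tfrac12 l(\Delta_j)(l(\Delta_j)+1)$ and that disjointness and containment hold exactly as stated. This is where Lemma~\ref{secondgathering} does the real work: part~$4$ (the vanishing of entries strictly above the base of each triangle, proved via the binomial recurrence in part~$3$ together with Lucas-type cancellation $\binom{j-k}{l}$ summing to $2^{j-k}\equiv 0$) guarantees the triangle is genuinely a ``block'' of the orbit and not merely a bookkeeping device, and part~$5$ leverages part~$4$ together with part~$2$ (constant coefficients equal to $1$) to force disjointness. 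Once these combinatorial facts are in hand the final estimate is a routine application of Cauchy--Schwarz; the only subtlety in the arithmetic is the handling of the edge cases $\deg(f)=0$ and $f(0)=0$, which the ``$+1$'' in the statement accommodates.
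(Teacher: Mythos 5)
Your main engine is exactly the paper's proof: the minimal $n$ with $\deg(f)<2^n$, the triangles $\Delta_j$ with $\#\Delta_j=\frac{1}{2}l(\Delta_j)(l(\Delta_j)+1)$, disjointness via parts 4 and 5 of Lemma \ref{secondgathering}, part 6 giving $\tau(f)=\sum_j l(\Delta_j)$, and the Cauchy--Schwarz step with $2^n\leq 2\deg(f)$ are all as in the paper, and that portion is correct. The genuine gap is in your reduction to the case $f(0)=1$. Your claim that ``after at most one application of $S$ the constant coefficient becomes $1$ and stays $1$ thereafter'' is false: Lemma \ref{one} has the hypothesis $[r](0)=1$ \emph{or} $[r](1)=1$, and when $f(0)=f(1)=0$ it says nothing. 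Concretely, for $f=x(x+1)^m$ one has $T^i(f)=x(x+1)^{m-i}$ for $0\leq i\leq m$, so the constant coefficient stays $0$ for $m+1$ steps. Your parenthetical fallback --- let $g$ be the first iterate with constant term $1$ --- does not rescue the estimate as you wrote it: then $\tau(f)=(k+1)+\tau(g)$ where $g=T^{k+1}(f)$ and $k$ can be of order $\deg(f)$, and since you only record $\deg(g)\leq\deg(f)$ you obtain $\tau(f)\leq k+1+2\deg(f)^{1.5}$, which exceeds $2\deg(f)^{1.5}+1$ as soon as $k\geq 1$. A single additive ``$+1$'' cannot absorb an initial segment of unbounded length.

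The repair is a two-line argument that the paper carries out, using Lemma \ref{one} in the contrapositive direction rather than forward: if $T^{i+1}(f)(0)=0$, then necessarily $T^i(f)(0)=0$ \emph{and} $T^i(f)(1)=0$. Hence, taking $k$ maximal with $T^k(f)(0)=0$ (it exists and is $<\tau(f)$ since $T^{\tau(f)}(f)=1$), every one of the first $k$ steps is division by $x+1$ and lowers the degree by exactly one, so $\deg(T^k(f))=\deg(f)-k$ and $\deg(T^{k+1}(f))\leq\deg(f)-k$. This gives $\tau(f)=k+1+\tau(T^{k+1}(f))\leq k+1+2(\deg(f)-k)^{1.5}\leq 2\deg(f)^{1.5}+1$: the $k$ initial steps are paid for by the drop in degree, and the ``$+1$'' absorbs only the single step beyond it. With this correction your proposal coincides with the paper's proof.
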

\begin{proof}
	We start by showing that it is sufficient to prove that $\tau(f)\leq 2\deg(f)^{1.5}$ for all $f\in\F_2[x]$ with $f(0)=1$: Given any $f\in\F_2[x]$ with $f(0)=0$ there exists a maximal $\tau(f)>k\geq 0$ such that $T^k(f)(0)=0$ (as $T^{\tau(f)}(f)(0)=1)$. By Lemma \ref{one} we have that $T^i(f)(1)=0$ for all $0\leq i<k$. Thus, $\deg(T^k(f))=\deg(f)-k$ and $\deg((T^{k+1}(f))\leq \deg(f)- k$. Hence, $$\tau(f)=k+1+\tau(T^{k+1}(f))\leq k +1 +2(\deg(f)-k)^{1.5}\leq 2\deg(f)^{1.5}+1.$$
	
Now, by part $6.$ of Lemma \ref{secondgathering} we have $\tau(f)=\sum_{j=0}^{\deg(f)}l(\Delta_j)$. By $5.$  of Lemma \ref{secondgathering} we conclude that $$\sum_{j=1}^{\deg(f)}\frac{l(\Delta_j)(l(\Delta_j)+1)}{2}=\sum_{j=1}^{\deg(f)}\#\Delta_j\leq 2^n\deg(f).$$
Thus, by the Cauchy-Schwarz inequality, we obtain (using $2^n\leq 2\deg(f)$) 
\begin{align*}
	(\tau(f))^2
	&=\left(\sum_{j=1}^{\deg(f)}l(\Delta_j)\right)^2
	\leq \deg(f)\cdot \sum_{j=1}^{\deg(f)}l(\Delta_j)^2
	\leq 2\deg(f)\sum_{j=0}^{\deg(f)}\frac{l(\Delta_j)(l(\Delta_j)+1)}{2}\\&\leq 2\deg(f)2^n\deg(f)
	\leq 4\cdot (\deg(f))^3.\end{align*}
	Thus, $\tau(f)\leq 2\deg(f)^{1.5}$.
\end{proof}
\begin{figure}[ht]
	\centering
	\includegraphics[width=0.6\textwidth]{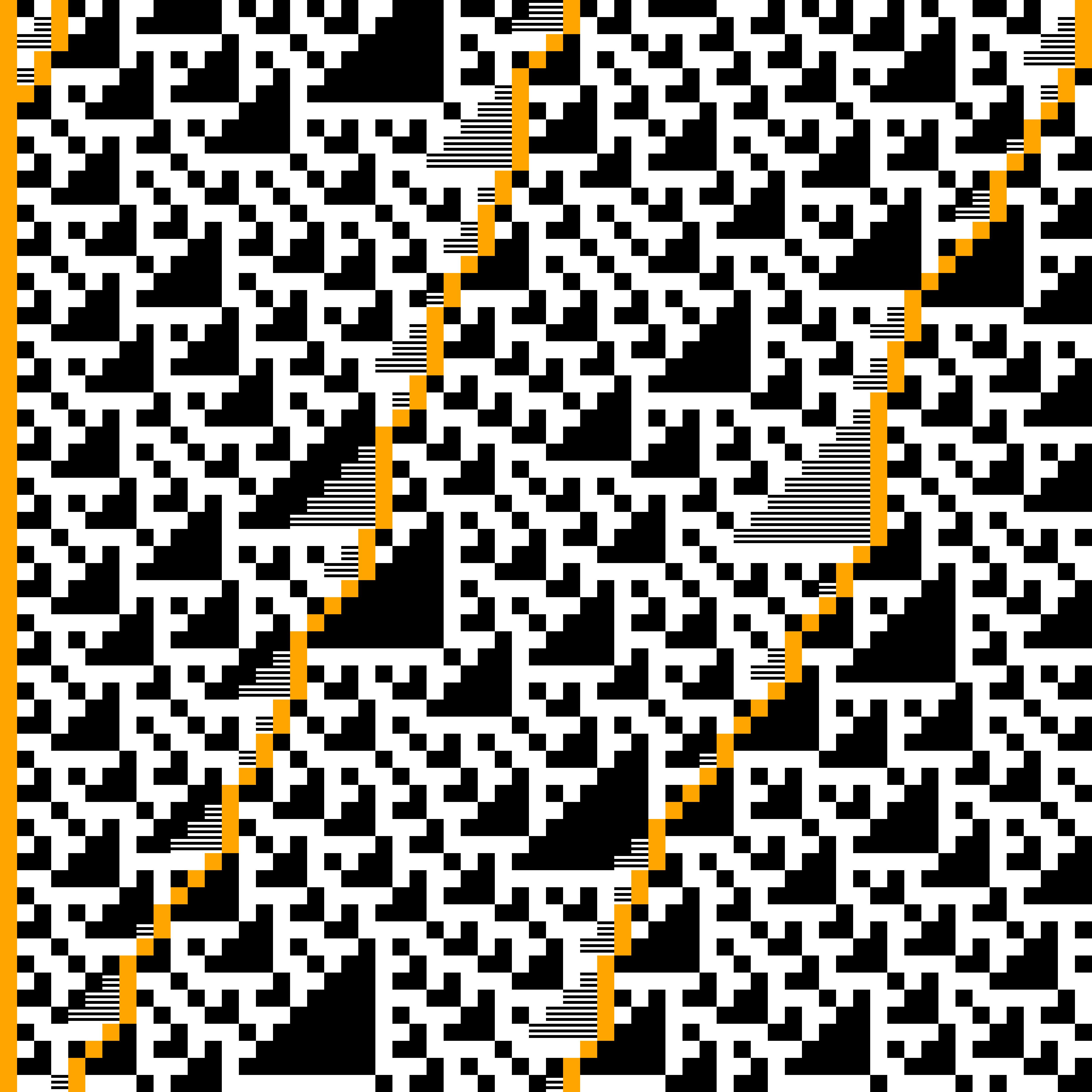}
	\captionsetup{width=0.6\textwidth} 
	\caption{Illustration of the matrix $A^{P_{R}}$ for  $P_{R}(x)=x^{63} + x^{61} + x^{60} + x^{58} + x^{56} + x^{55} + x^{50} + x^{48} + x^{46} + x^{44} + x^{42} + x^{41} + x^{37} + x^{34} + x^{30} + x^{28} + x^{26} + x^{21} + x^{20} + x^{19} + x^{17} + x^{15} + x^{13} + x^{12} + x^{10} + x^{8} + x^{7} + x^{4} + x^{2} + x + 1$. This is an example of a visualization as in Figure \ref{figure 1} with a polynomial chosen at random under the uniform distribution of all polynomials of degree $63$ with non-zero constant coefficient.}
	\label{figure 2}
\end{figure}
\begin{rem}
	It is possible to obtain slightly better bounds, e.g., $\tau(f)\leq \deg(f)^{1.5}+1$ by a more careful analysis. Also the proof suggests that one might want to look at $f$ such that many of the corresponding $\Delta_j$ have roughly length $\sqrt{2^n}$ in order to get a large stopping time.
\end{rem}
\begin{rem}
	Suppose that $f\in\F_2[x]$ and  $f(0)=1$. Look at $f^*=\frac{f+1}{x}$. Then $f(1)=j$ if and only if $f^*(1)=1+j$. Thus, if we look at the transformation 
		$T^*:\F_2[x]\rightarrow \F_2[x]$ defined by $$T^*(f)=\begin{cases}
			\frac{xf}{x+1}&\text{\ab if\ab} f(1)=0,\\
			\frac{f+1}{x+1}&\text{\ab if\ab} f(1)=1,
		\end{cases}$$
	then $T^*(f^*)=(T(f))^*$ for all
	 $f\in\F_2[x] $ with $f(0)=1$. 
\end{rem}
\begin{rem}
	Suppose that $f\in\F_2[x]$ and consider $A^f$ as defined in part 2. of Definition \ref{visualmatrix}. Then a straightforward induction on the degree of $f$ shows that $$f(x+1)=\sum_{k=0}^{\deg(f)}a_{\deg(f)-k+1,k}x^{\deg(f)-k}.$$
	This motivates the following transformation $U(A^f)$ defined by $U(A)_{i,j}=a_{\overline{\deg(f)-i-j},j}$, where again the row indices are understood to be mod $2^n$. We have $$f(x+1)=\sum_{k=0}^{\deg(f)}a_{\overline{\deg(f)+1-k},k}x^{\deg(f)-k}=\sum_{k=0}^{\deg(f)}U(A^f)_{\overline{-1},k}x^{\deg(f)-k}.$$
	Now, define $\hat{T}:\F_2[x]\rightarrow\F_2[x]$ by
	
	$$\hat{T}(f)=\begin{cases}
		\frac{f+f(0)}{x}&\text{\ab if\ab} [P^{-1}(f)](0)=0,\\
		\frac{(x+1)f+f(0)}{x}&\text{\ab if\ab} [P^{-1}(f)](0)=1,
	\end{cases}$$
or shorter $\hat{T}(f)=[P(T_1([P^{-1}(f)]))]$.
Note that the visualization of the orbit of $f$ in $A^f$ under $T$ corresponds under $U$ to the corresponding visualization of the orbit of $\sum_{k=0}^{\deg(f)}U(A^f)_{0,k}x^{\deg(f)-k}$ under $\hat{T}$.

  $\hat{T}$ might be used to find polynomials with large stopping time. E.g., take $P_n=\sum_{k=0}^{2^n-1}x^{4^n-k\cdot (2^n+1)-1}$. Computations for small $n$ suggest that $\hat{\tau}(P_n)\approx n\cdot 4^n$, where $\hat{\tau}(f)$ is the least $n\in \N$ such that $(\hat{T})^n(f)=1$.
\end{rem}
\begin{rem}\label{generalization}
	It is unclear whether Theorem \ref{Upperbound} gives a tight upper bound for the stopping time. In a slightly more general setting it does: For positive natural numbers $n,m$ we define $M_C^{n,m}$ to be the set of all $\intervalN{0}{n}\times \intervalC{0}{m}$-matrices $A$ with entries in $\F_2$ with the following property: For all $0\leq i<n$ and $0\leq j<m$ we have $a_{i,j}=a_{i,j+1}+a_{i-1,j+1}$ (with the convention $a_{-1,j}=a_{n-1,j}$). Consider the sets $A_1=\{(i,j)\in\intervalN{0}{n}\times\intervalC{0}{m}\mid a_{i,j}=1\}$ and $A^+_1=\{(i,j)\in A_1\mid j>0\}\cup\{(i,0)\in A_1\mid a_{i-1,0}=1\}$. One can define a map $T_A: A^+_1\rightarrow A_1$ by $$T_A((i,j))=\begin{cases}
		(i-1,j)&\textit{\ab if\ab} a_{i-1,j}=1\\
	(i,j-1)&\textit{\ab if\ab} a_{i-1,j}=0.\\
	\end{cases}$$
	Given $(i_0,j_0)\in A^+_1$ the stopping time $\tau_A((i_0,j_0))$ is the least $k\in\N$ with $$p\left(T^k_A((i_0,j_0))\right)=\min_{n\in\N}p\left(T^n_A((i_0,j_0))\right),$$ where $p((i,j))=j$. For example, for $f\in \F_2[x]$ we have $\tau(f)=\tau_{U(A^{f})}((0,\deg(f)))$. Given $n,m$ define $\tau_{n,m}=\max\{\tau_A(n-1,m)\mid A\in M_C^{n,m} \wedge (n-1,m)\in A_1\}$. Without going into further detail here, there exists $C>0$ such that $\tau_{n,n-1}\geq C\cdot n^{1.5}$ for infinitely many $n\in\N$ (see Figure \ref{figure 3}).  
	\begin{figure}[ht]
		\centering
		\includegraphics[width=0.6\textwidth]{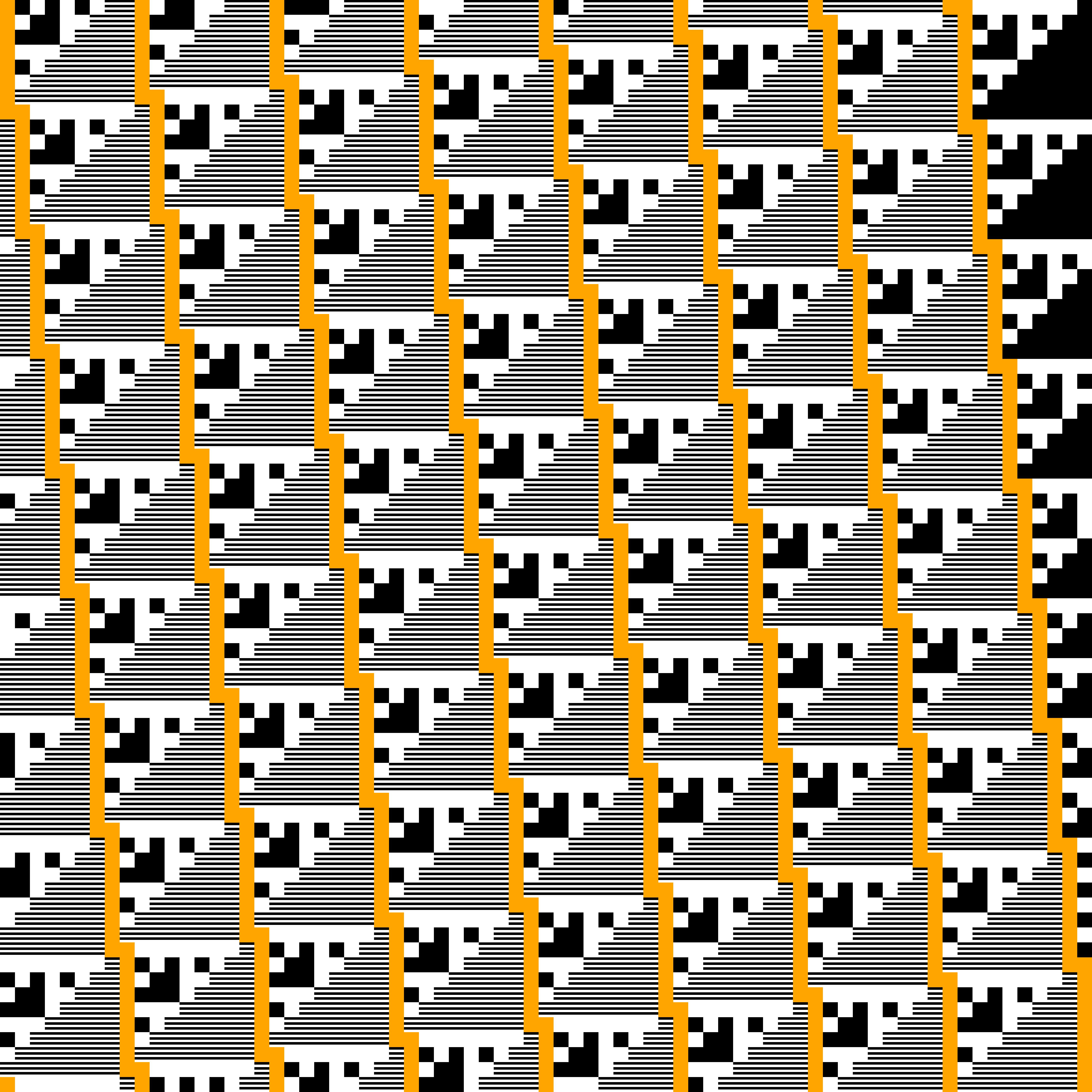}
		\captionsetup{width=0.6\textwidth} 
		\caption{Illustration of the orbit of $T_A((72,72))$ for a matrix $A$ for which $\tau_A((72,72))$ is close to $73^{1.5}$.}
		\label{figure 3}
	\end{figure} 
\end{rem}
\begin{rem}
	Let $p\in\N$ be prime. Consider the map $$T_p:\F_p[x]\rightarrow \F_p[x]; f\mapsto\begin{cases}
		\frac{xf+f(-1)}{x+1}&\text{\ab if\ab} f(-1)=f(0)\neq 0\\
		\frac{f-f(-1)}{x+1}&\text{\ab else} .
	\end{cases}$$ It is straightforward to see that for any $f\in\F_p[x]\setminus \{0\}$ there exist $n\in\N$ such that $\deg(T_p^n(f))=0$. Let $\tau_p(f)$ be the minimal $n$ with this property.  An analogous argument as in this section can be applied to show that there exist $C_p>0$ such that $\tau_p(f)\leq C_p (\deg(f))^{1.5}$ for all $f\in\F_p[x]\setminus \{0\}$.
	%Similar results hold for the map:
	 %$$T^*_p:\F_p[x]\rightarrow %\F_p[x]; f\mapsto\begin{cases}
	%	\frac{(x-1)f}{x}&\text{\ab if\ab} f(0)= 0\\
	%	\frac{f-f(0)}{x}&\text{\ab if\ab} f(0)\neq 0.
	%\end{cases}$$
\end{rem}
\begin{nofigures}
\subsection*{Acknowledgments}
The author thanks Angelot Behajaina for many valuable comments on an earlier version of this paper and Claudius Röhl for explicit computations of stopping times.

	\bibliography{Voll}
	\bibliographystyle{plain}
	\textit{email address:} Manuel.Inselmann@gmx.de
\end{nofigures}
\end{document}